\numberwithin{equation}{section}
\def\eps{\varepsilon}
\def\mand{\qquad\mbox{and}\qquad}
\def\fl#1{\left\lfloor#1\right\rfloor}
\def\({\left(}
\def\){\right)}
\newcommand{\e}{\ensuremath{\mathbf{e}}}
\newcommand{\cA}{\ensuremath{\mathcal{A}}}
\newcommand{\cB}{\ensuremath{\mathcal{B}}}
\newcommand{\cS}{\ensuremath{\mathcal{S}}}
\newcommand{\cX}{\ensuremath{\mathcal{X}}}
\newcommand{\sN}{\ensuremath{\mathscr{N}}}
\newcommand{\sP}{\ensuremath{\mathscr{P}}}
\newcommand{\NN}{\ensuremath{\mathbb{N}}}
\newcommand{\PP}{\ensuremath{\mathbb{P}}}
\newcommand{\RR}{\ensuremath{\mathbb{R}}}
\newcommand{\ZZ}{\ensuremath{\mathbb{Z}}}
\newtheoremstyle{customthm}
{1em}                    % ABOVESPACE
{1em}                    % BELOWSPACE
{\itshape}               % BODYFONT
{}                       % INDENT
{\scshape}               % HEADFONT
{.}                      % HEADPUNCT
{5pt plus 1pt minus 1pt} % HEADSPACE
{}                       % CUSTOM-HEAD-SPEC
\newtheoremstyle{customrem}
{1em}                    % ABOVESPACE
{1em}                    % BELOWSPACE
{}                       % BODYFONT
{}                       % INDENT
{\scshape}               % HEADFONT
{.}                      % HEADPUNCT
{5pt plus 1pt minus 1pt} % HEADSPACE
{}                       % CUSTOM-HEAD-SPEC
\theoremstyle{customthm}
\newtheorem{X}{X}[section]
\newtheorem{theorem}[X]{Theorem}
\newtheorem{lemma}[X]{Lemma}
\theoremstyle{customrem}
\renewcommand{\le}{\ensuremath{\leqslant}}
\renewcommand{\ge}{\ensuremath{\geqslant}}
\def\fl#1{\left\lfloor#1\right\rfloor}
\renewcommand{\pod}[1]{\mathchoice
	{\allowbreak \if@display \mkern 5mu\else \mkern 5mu\fi (#1)}
	{\allowbreak \if@display \mkern 5mu\else \mkern 5mu\fi (#1)}
	{\mkern4mu(#1)}
	{\mkern4mu(#1)}
}
\newcommand*{\defeq}{\mathrel{\vcenter{\baselineskip0.5ex \lineskiplimit0pt
			\hbox{\scriptsize.}\hbox{\scriptsize.}}}%
	=}
\DeclareSymbolFont{EUEX}{U}{euex}{m}{n}
\DeclareSymbolFont{euexlargesymbols}{U}{euex}{m}{n}
\DeclareMathSymbol{\intop}{\mathop}{euexlargesymbols}{"52}
\def\int{\intop\nolimits}
\DeclareSymbolFont{euexsymbols}     {U}{euex}{m}{n}
\DeclareMathSymbol{\smallint}{\mathop}{euexsymbols}{"52}
\def\sums{
	\@ifnextchar[
	{\sums@i}
	{\ensuremath{\sum}}
}
\def\sums@i[#1]{
	\@ifnextchar[
	{\sums@ii{#1}}
	{\ensuremath{\sum_{#1}}}
}
\def\sums@ii#1[#2]{
	\@ifnextchar[
	{\sums@iii{#1}{#2}}
	{\ensuremath{\sum_{\substack{#1 \\ #2}}}}
}
\def\sums@iii#1#2[#3]{
	\@ifnextchar[
	{\sums@iv{#1}{#2}{#3}}
	{\ensuremath{\sum_{\substack{#1 \\ #2 \\ #3}}}}
}
\def\sums@iv#1#2#3[#4]{
	\@ifnextchar[
	{\sums@v{#1}{#2}{#3}{#4}}
	{\ensuremath{\sum_{\substack{#1 \\ #2 \\ #3 \\ #4}}}}
}
\def\sums@v#1#2#3#4[#5]{
	{\ensuremath{\sum_{\substack{#1 \\ #2 \\ #3 \\ #4 \\ #5}}}}
}
\def\sumss[#1]{
	\@ifnextchar[
	{\sumss@i[#1]}
	{
		\ifthenelse{\isempty{#1}}
		{\ensuremath{\sum}}
		{
			\ifthenelse{\equal{#1}{'}}
			{\ensuremath{\sideset{}{^{\prime}}{\sum}}}
			{\ensuremath{\sideset{}{^{#1}}{\sum}}}
		}
	}
}
\def\sumss@i[#1][#2]{
	\@ifnextchar[
	{\sumss@ii[#1]{#2}}
	{
		\ifthenelse{\isempty{#1}}
		{\ensuremath{\sum_{#2}}}
		{
			\ifthenelse{\equal{#1}{'}}
			{\ensuremath{\sideset{}{^{\prime}}{\sum}_{#2}}}
			{\ensuremath{\sideset{}{^{#1}}{\sum}_{#2}}}
		}
	}
}
\def\sumss@ii[#1]#2[#3]{
	\@ifnextchar[
	{\sumss@iii[#1]{#2}{#3}}
	{
		\ifthenelse{\isempty{#1}}
		{\ensuremath{\sum_{\substack{#2 \\ #3}}}}
		{
			\ifthenelse{\equal{#1}{'}}
			{\ensuremath{\sideset{}{^{\prime}}{\sum}_{\substack{#2 \\ #3}}}}
			{\ensuremath{\sideset{}{^{#1}}{\sum}_{\substack{#2 \\ #3}}}}
		}
	}
}
\def\sumss@iii[#1]#2#3[#4]{
	\@ifnextchar[
	{\sumss@iv[#1]{#2}{#3}{#4}}
	{
		\ifthenelse{\isempty{#1}}
		{\ensuremath{\sum_{\substack{#2 \\ #3 \\ #4}}}}
		{
			\ifthenelse{\equal{#1}{'}}
			{\ensuremath{\sideset{}{^{\prime}}{\sum}_{\substack{#2 \\ #3 \\ #4}}}}
			{\ensuremath{\sideset{}{^{#1}}{\sum}_{\substack{#2 \\ #3 \\ #4}}}}
		}
	}
}
\def\sumss@iv[#1]#2#3#4[#5]{
	\@ifnextchar[
	{\sumss@v[#1]{#2}{#3}{#4}{#5}}
	{
		\ifthenelse{\isempty{#1}}
		{\ensuremath{\sum_{\substack{#2 \\ #3 \\ #4 \\ #5}}}}
		{
			\ifthenelse{\equal{#1}{'}}
			{\ensuremath{\sideset{}{^{\prime}}{\sum}_{\substack{#2 \\ #3 \\ #4 \\ #5}}}}
			{\ensuremath{\sideset{}{^{#1}}{\sum}_{\substack{#2 \\ #3 \\ #4 \\ #5}}}}
		}
	}
}
\def\sumss@v[#1]#2#3#4#5[#6]{
	{\ifthenelse{\isempty{#1}}
		{\ensuremath{\sum_{\substack{#2 \\ #3 \\ #4 \\ #5 \\ #6 }}}}
		{
			\ifthenelse{\equal{#1}{'}}
			{\ensuremath{\sideset{}{^{\prime}}{\sum}_{\substack{#2 \\ #3 \\ #4 \\ #5 \\ #6 }}}}
			{\ensuremath{\sideset{}{^{#1}}{\sum}_{\substack{#2 \\ #3 \\ #4 \\ #5 \\ #6 }}}}
		}
	}
}
\def\sumsstxt[#1]{
	\@ifnextchar[
	{\sumsstxt@i[#1]}
	{
		\ifthenelse{\isempty{#1}}
		{\ensuremath{\textstyle\sum}}
		{
			\ifthenelse{\equal{#1}{'}}
			{\ensuremath{\sideset{}{^{\prime}}{\textstyle\sum}}}
			{\ensuremath{\sideset{}{^{#1}}{\textstyle\sum}}}
		}
	}
}
\def\sumsstxt@i[#1][#2]{
	\@ifnextchar[
	{\sumsstxt@ii[#1]{#2}}
	{
		\ifthenelse{\isempty{#1}}
		{\ensuremath{\textstyle\sum_{#2}}}
		{
			\ifthenelse{\equal{#1}{'}}
			{\ensuremath{\sideset{}{^{\prime}}{\textstyle\sum}_{#2}}}
			{\ensuremath{\sideset{}{^{#1}}{\textstyle\sum}_{#2}}}
		}
	}
}
\def\sumsstxt@ii[#1]#2[#3]{
	\@ifnextchar[
	{\sumsstxt@iii[#1]{#2}{#3}}
	{
		\ifthenelse{\isempty{#1}}
		{\ensuremath{\textstyle\sum_{\substack{#2 \\ #3}}}}
		{
			\ifthenelse{\equal{#1}{'}}
			{\ensuremath{\sideset{}{^{\prime}}{\textstyle\sum}_{\substack{#2 \\ #3}}}}
			{\ensuremath{\sideset{}{^{#1}}{\textstyle\sum}_{\substack{#2 \\ #3}}}}
		}
	}
}
\def\sumsstxt@iii[#1]#2#3[#4]{
	\@ifnextchar[
	{\sumsstxt@iv[#1]{#2}{#3}{#4}}
	{
		\ifthenelse{\isempty{#1}}
		{\ensuremath{\textstyle\sum_{\substack{#2 \\ #3 \\ #4}}}}
		{
			\ifthenelse{\equal{#1}{'}}
			{\ensuremath{\sideset{}{^{\prime}}{\textstyle\sum}_{\substack{#2 \\ #3 \\ #4}}}}
			{\ensuremath{\sideset{}{^{#1}}{\textstyle\sum}_{\substack{#2 \\ #3 \\ #4}}}}
		}
	}
}
\def\sumsstxt@iv[#1]#2#3#4[#5]{
	{\ifthenelse{\isempty{#1}}
		{\ensuremath{\textstyle\sum_{\substack{#2 \\ #3 \\ #4 \\ #5}}}}
		{
			\ifthenelse{\equal{#1}{'}}
			{\ensuremath{\sideset{}{^{\prime}}{\textstyle\sum}_{\substack{#2 \\ #3 \\ #4 \\ #5}}}}
			{\ensuremath{\sideset{}{^{#1}}{\textstyle\sum}_{\substack{#2 \\ #3 \\ #4 \\ #5}}}}
		}
	}
}
\def\prods{
	\@ifnextchar[
	{\prods@i}
	{\ensuremath{\prod}}
}
\def\prods@i[#1]{
	\@ifnextchar[
	{\prods@ii{#1}}
	{\ensuremath{\prod_{#1}}}
}
\def\prods@ii#1[#2]{
	\@ifnextchar[
	{\prods@iii{#1}{#2}}
	{\ensuremath{\prod_{\substack{#1 \\ #2}}}}
}
\def\prods@iii#1#2[#3]{
	\@ifnextchar[
	{\prods@iv{#1}{#2}{#3}}
	{\ensuremath{\prod_{\substack{#1 \\ #2 \\ #3}}}}
}
\def\prods@iv#1#2#3[#4]{
	{\ensuremath{\prod_{\substack{#1 \\ #2 \\ #3 \\ #4}}}}
}
\title[Beatty primes from fractional powers of almost--primes]
      {Beatty primes from fractional powers of almost--primes}
\author[Victor Zhenyu Guo]{Victor Zhenyu Guo}
\address{School of Mathematics and Statistics, Xi'an Jiaotong University, Xi'an, Shaanxi, 710049, P. R. China}
\email{guozyv@xjtu.edu.cn}
\author[Jinjiang Li]{Jinjiang Li}
\address{Department of Mathematics, China University of Mining and Technology, Beijing, 100083, P. R. China}
\email{jinjiang.li.math@gmail.com}
\author[Min Zhang]{Min Zhang}
\address{School of Applied Science, Beijing Information Science and Technology University, Beijing, 100192, P. R. China}
\email{min.zhang.math@gmail.com}
\begin{document}
\footnotetext[1]{Jinjiang Li is the corresponding author.}

\begin{abstract}
Let $\alpha>1$ be irrational and of finite type, $\beta\in\mathbb{R}$. In this paper, it is proved that for $R\geqslant13$ and any fixed $c\in(1,c_R)$, there exist infinitely many primes in the intersection of Beatty sequence $\mathcal{B}_{\alpha,\beta}$ and $\lfloor n^c\rfloor$, where $c_R$ is an explicit constant depending on $R$ herein, $n$ is a natural number with at most $R$ prime factors, counted with multiplicity.
\end{abstract}

\maketitle

{\textbf{Keywords}}: Beatty sequence; Piatetski--Shapiro prime; exponential sum; almost--prime

{\textbf{MR(2020) Subject Classification}}: 11N05, 11L07, 11N80, 11B83, 11N36

%\newcommand{\tind}[1]{\ensuremath{\widetilde{\mathbf{1}}_{#1}}}

%%%%%%%%%%%%%%%%%%%%%%%%
%%%%% PAPER BEGINS %%%%%
%%%%%%%%%%%%%%%%%%%%%%%%

\section{Introduction}
\label{sec:intro}

For fixed real numbers $\alpha$ and $\beta$, the associated non--homogeneous Beatty sequence is the sequence of integers defined by
$$
\cB_{\alpha,\beta} \defeq \(\fl{\alpha n+\beta}\)_{n=1}^\infty,
$$
where $\fl{t}$ denotes the integral part of any $t\in\RR$. Such sequences are also called generalized arithmetic progressions. If $\alpha$ is irrational, it follows from a classical exponential sum estimate of Vinogradov~\cite{Vino} that $\cB_{\alpha,\beta}$ contains infinitely many prime numbers. In fact, one has the asymptotic formula
$$
\#\big\{\text{prime~}p\le x:p\in\cB_{\alpha,\beta}\big\}=\alpha^{-1}\pi(x)(1+o(1))
$$
as $x\to\infty$, where $\pi(x)$ is the prime counting function.

For $1<c\not\in\mathbb{N}$, the Piatetski--Shapiro sequences are sequences of the form
$$
\mathscr{N}^{(c)} \defeq (\fl{n^c})_{n=1}^\infty.
$$
Such sequences have been named in honor of Piatetski--Shapiro, who, in \cite{PS}, proved that $\mathscr{N}^{(c)}$ contains infinitely many primes for $c\in(1,\frac{12}{11})$. To be specific, Piatetski--Shapiro showed that, for $c\in(1,\frac{12}{11})$, the counting function
$$
\pi^{(c)}(x) \defeq \# \big\{\text{\rm prime~}p\le x : p\in\mathscr{N}^{(c)}\big\}
$$
satisfies the asymptotic formula
$$
\pi^{(c)}(x) = \frac{x^{1/c}}{\log x}(1+o(1))
$$
as $x\to\infty$. The range of $c$ in the above asymptotic formula, in which it is known that $\sN^{(c)}$ contains infinitely many primes, has been extended many times over the years and is currently known to hold for all $c\in(1,\frac{2817}{2426})$ thanks to Rivat and Sargos \cite{RiSa}. Rivat and Wu \cite{RiWu} also showed that there exist infinitely many Piatetski--Shapiro primes for $c\in(1, \frac{243}{205})$ by showing a lower bound of $\pi^{(c)}(x)$ with the excepted order of
magnitude. It is conjectured that the above asymptotic formula should be held for all larger values of $c\not\in\mathbb{N}$.  We remark that if $c\in(0,1)$, then $\sN^{(c)}$ contains all natural numbers, and hence all prime numbers in particular. The estimation of Piatetski--Shapiro prime is an approximation of the well--known conjecture that there exist infinitely many primes of the form $n^2+1$.

For any natural number $R \ge 1$, let $\sP_R$ denote the set of $R$--almost primes, i.e., the set of natural numbers with at most $R$ prime factors, counted according to multiplicity. By the motivation of finding a subset of Piatetski--Shapiro primes related to almost--primes, Baker, Banks, Guo and Yeager \cite{BBGY} proved that, for any fixed $c\in(1,\frac{77}{76})$, there holds
$$
\#\big\{n\le x:n\in\sP_8\text{~and~}\fl{n^c}\textrm{~is prime}\big\}
\gg\frac{x}{\log x},
$$
where the implied constant in the symbol $\gg$ depends only on $c$. In this paper, we consider the primes generated from the
intersection of a Beatty sequence and a set with above type, and establish the following theorem.

\begin{theorem}
\label{thm:main}
Suppose that $\alpha,\beta\in\mathbb{R}$. Let $\alpha>1$ be irrational and of finite type.
For $R\geqslant13$ and any fixed $c\in(1,c_R)$ with $c_R \defeq \frac{96R-12}{88R+85}$, there holds
\begin{equation*}
\#\big\{p\leqslant x: n\in \mathscr{P}_R, \fl{n^c} = p \in \cB_{\alpha, \beta}, \textrm{ p is prime}  \big\}
\gg\frac{x^{1/c}}{\log x},
\end{equation*}
where the implied constant in the symbol $\gg$ depends only on $\alpha,\beta$ and $c$.	
\end{theorem}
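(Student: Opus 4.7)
The plan is to combine a Richert--Greaves type weighted sieve that produces $R$--almost--primes with exponential--sum estimates that simultaneously detect the Beatty and Piatetski--Shapiro conditions. Setting $y=x^{1/c}$, every prime $p\le x$ counted in Theorem~\ref{thm:main} corresponds to a unique integer $n=\rf{p^{1/c}}\le y$ satisfying $\fl{n^c}=p$, so the quantity to be bounded below is essentially
$$
S(y)\defeq\sum_{\substack{n\le y\\ n\in\sP_R}}\mathbf{1}_{\fl{n^c}\in\cB_{\alpha,\beta}}\cdot\mathbf{1}_{\fl{n^c}\ \textrm{prime}}.
$$
Following the blueprint of \cite{BBGY}, a weighted sieve reduces the desired lower bound $S(y)\gg y/\log y$ to a level--of--distribution estimate of the form
$$
\sum_{\substack{n\le y\\ d\mid n}}\mathbf{1}_{\fl{n^c}\in\cB_{\alpha,\beta}}\cdot\mathbf{1}_{\fl{n^c}\ \textrm{prime}}=\frac{\varrho(d)}{d}\,M(y)+r_d,\qquad\sum_{d\le y^{\theta_0}}|r_d|\ll\frac{y}{(\log y)^A}
$$
for every fixed $A>0$, some level $\theta_0=\theta_0(c)>0$, a multiplicative density $\varrho$, and a main term $M(y)\asymp y/\log y$. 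Since the admissible value of $R$ in such a weighted sieve is a decreasing function of $\theta_0$, the explicit constant $c_R=(96R-12)/(88R+85)$ will emerge at the break--even point between the sieve axioms and the exponential--sum bounds described below.

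Next I would linearize the two indicators. For the Beatty condition, set $\gamma=1/\alpha$; it is standard that $m\in\cB_{\alpha,\beta}$ if and only if the fractional part $\{\gamma m+\delta\}$ lies in a fixed subinterval of $[0,1)$ of length $\gamma$, where $\delta$ depends linearly on $\beta$ and $\gamma$. I would replace the indicator of this interval by its Vaaler trigonometric approximant of order $H$, producing a density term $\gamma$ plus a short Fourier sum $\sum_{1\le|h|\le H}c_h\,e(h\gamma m)$ with $|c_h|\ll 1/|h|$. The finite--type hypothesis on $\alpha$ is invoked precisely to control the factors $\|h\gamma\|^{-1}$ that appear when one bounds the main terms. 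For the Piatetski--Shapiro condition I would use the identity $\mathbf{1}_{m\in\sN^{(c)}}=\fl{-m^{1/c}}-\fl{-(m+1)^{1/c}}$, expand each floor through a Vaaler--type Fourier truncation of length $K$, and combine with a Heath-Brown or Vaughan decomposition of the prime indicator $\Lambda(\fl{n^c})$.

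These reductions express the error $r_d$ as a linear combination of bilinear sums of the shape
$$
\sum_{\substack{n\le y\\ d\mid n}}\Lambda(\fl{n^c})\,e\bigl(h\gamma n+kn^c\bigr),\qquad 1\le|h|\le H,\ 1\le|k|\le K,
$$
plus degenerate $h=0$ or $k=0$ pieces that are handled either by the prime number theorem in Beatty sequences or by classical Piatetski--Shapiro estimates. Substituting $n=dm$ and applying Heath-Brown's identity splits the sum into Type~I pieces (estimated by a second--derivative bound on the composite phase $kn^c+h\gamma n$) and Type~II pieces (handled by Cauchy--Schwarz followed by van der Corput's $B$--process on the difference phase $k((n+s)^c-n^c)$, the linear phase being absorbed into the Cauchy--Schwarz outer variable).

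The main obstacle is simultaneously balancing the four parameters $(H,K,\theta_0,R)$: $H$ and $K$ must be taken large enough for the Vaaler tails to be negligible, yet small enough that the Type~II estimates save a genuine power of $y$ over the trivial bound $y/d$. The quantitative gain is governed by a classical exponent pair of Bombieri--Iwaniec or Sargos type, and the arithmetic of $c_R=(96R-12)/(88R+85)$ reflects the solution of the resulting linear program in $c$ and $R$. Once $\theta_0$ exceeds the Richert threshold compatible with $R\ge13$, the weighted sieve delivers $S(y)\gg y/\log y=x^{1/c}/\log x$, which is exactly the assertion of Theorem~\ref{thm:main}.
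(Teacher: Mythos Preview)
Your high--level plan---Greaves' weighted sieve, Vaaler expansions for the two indicator functions, a Heath--Brown decomposition and a Type~I/Type~II split---is exactly the paper's strategy. The gap is in how you assemble the exponential sums.

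You sum over $n\le y$ with $d\mid n$ and carry the weight $\Lambda(\fl{n^c})$. Heath--Brown's identity decomposes $\Lambda(m)$ through the multiplicative structure of the \emph{integer} $m$; it gives you nothing usable for $\Lambda(\fl{n^c})$ as a function of $n$, so no genuine bilinear sums in $n$ emerge. Moreover, once you sum over $n$ directly there is no Piatetski--Shapiro condition left to detect: $\fl{n^c}\in\sN^{(c)}$ is automatic, so your factor $\e(kn^c)$ has no source. And the Beatty condition is on $m=\fl{n^c}$, so its Fourier phase is $\e(h\alpha^{-1}m)\approx\e(h\alpha^{-1}n^c)$, not $\e(h\alpha^{-1}n)$. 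In short, the displayed sum $\sum_{d\mid n}\Lambda(\fl{n^c})\,\e(h\gamma n+kn^c)$ is not what the problem produces.

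The paper fixes this by flipping the summation to the prime variable. With $\gamma=c^{-1}$ and $a=\alpha^{-1}$,
\[
|\cA_d|=\sum_{p\le x}\Bigl(\fl{-p^\gamma d^{-1}}-\fl{-(p+1)^\gamma d^{-1}}\Bigr)\chi_{\alpha,\beta}(p),
\]
so the condition $d\mid n$ is encoded by a sawtooth in $p^\gamma/d$ (not the bare indicator $\mathbf 1_{m\in\sN^{(c)}}$, which would lose the $d$). After Vaaler on both sawtooths and Heath--Brown on $\Lambda$, the governing sum is
\[
\sum_{x/2<n\le t}\Lambda(n)\,\e\bigl(jd^{-1}n^{\gamma}+ahn\bigr),
\]
with $\Lambda$ sitting on the running integer, a nonlinear phase of exponent $\gamma<1$ from the $d$--divisibility detection, and a \emph{linear} Beatty phase $ahn$. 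Two consequences: the linear phase passes harmlessly through Weyl differencing, so the finite--type hypothesis on $\alpha$ is only needed in the degenerate $j=0$ pieces (handled by a Vinogradov bound for $\sum_m\Lambda(m)\e(ahm)$); and the dominant constraint on the level comes from a plain third--derivative estimate on the Type~I sums, giving $D\ll x^{\gamma-11/12-\eps}$. Combined with the sieve requirement $D^{(8R-1)/8}>x$ this yields precisely $\gamma>(88R+85)/(96R-12)$, i.e.\ $c<c_R$. No Bombieri--Iwaniec machinery or $B$--process is required.
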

We give a short table for the admissible pairs $(R, c_R)$.
\begin{center}\label{tab:c_R}
	\begin{tabular}{|c|c||c|c||c|c|}
		\hline
		\vphantom{\Big|}$R$  &  $c_R$ & $R$  &  $c_R$ & $R$  &  $c_R$ \\
		\hline
		$13$ & $1.0056$ & $16$ & $1.0207$ & $19$ & $1.0313$ \\
		$14$ & $1.0113$ & $17$ & $1.0246$ & $20$ & $1.0341$ \\
		$15$ & $1.0163$ & $18$ & $1.0281$ & $21$ & $1.0367$ \\
		\hline
	\end{tabular}
\end{center}

\section{Preliminaries}

\subsection{Notation}

We denote by $\fl{t}$ and $\{t\}$ the integral part and the fractional part of $t$, respectively.
As usual, we put
$$
\e(t)\defeq e^{2\pi it}.
$$
Throughout this paper, we make considerable use of the sawtooth function, which is defined by
$$
\psi(t) \defeq t-\fl{t}-\frac{1}{2} = \{t\}-\frac{1}{2}.
$$
The notation $\| t \|$ is used to denote the distance
from the real number $t$ to the nearest integer, i.e.,
$$
\| t\| \defeq\min_{n\in\ZZ}|t-n|.
$$

Let $\PP$ denote the set of primes in $\NN$. The letter $p$ always denotes a prime. For the Beatty sequence $(\fl{\alpha n + \beta})_{n=1}^\infty$, we denote $a \defeq \alpha^{-1}$. We represent $\gamma \defeq c^{-1}$ for the Piatetski--Shapiro sequence $(\fl{n^c})_{n=1}^\infty$. We use notation of the form $m\sim M$ as an abbreviation for $M< m\le 2M$.

Throughout this paper, implied constants in symbols $O$, $\ll$
and $\gg$ may depend (where obvious) on the parameters $\alpha,\beta,c,\eps$, but are absolute otherwise. For given
functions $F$ and $G$, the notations $F\ll G$, $G\gg F$ and $F=O(G)$ are all equivalent to the statement that the inequality
$|F|\le \mathcal{C}|G|$ holds with some constant $\mathcal{C}>0$.

\subsection{The weighted sieve}

As is mentioned below, the following notation plays a crucial role in our arguments. We specify it to the form
which is suitable for our applications. Plainly, it is based on a result of Greaves \cite{Greaves} which relates the level of distribution to $R$--almost primality. More precisely, we say that an $X$--element set of integers $\cA$ has a level of distribution $D$, if there holds
$$
\sum_{d \le D} \left|\big|\{a\in \cA, a \equiv 0  \pmod d\}\big|
- \frac{\omega(d)}{d}X\right| \ll  \frac{X}{\log^2X}
$$
for some given multiplicative function $\omega(d)$.
Moreover, we have the following result, which is \cite[Chapter~5, Proposition~1]{Greaves}.

\begin{lemma}\label{lem:Greaves}
Let $R$ be an integer with $R\geqslant2$. Suppose that $\cA$ is an $X$--element set of positive integers with a level of distribution $D$ and degree $g$ in the sense that
$$
	a\in \cA \Longrightarrow a<D^g
	$$
with some real number $g<R-\delta_R$. Then
	$$
	\#\big\{a \in \cA:\text{\rm $a$~is an $R$--almost prime}\big\}
	\gg_g  \frac{X}{\log X}\,.
	$$
\end{lemma}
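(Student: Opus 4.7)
Lemma~1.2 is a standard application of the Richert weighted sieve, in the refined form due to Greaves; I would execute it in three steps.

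\emph{Weight construction.} Set $z \defeq D^{1/u}$ and $y \defeq D^g$ for a parameter $u \in (1,g)$ to be chosen, put $\kappa \defeq gu - R > 0$, and introduce the Richert-type weight
$$
W(a) \defeq 1 - \frac{1}{\kappa}\sum_{\substack{p \mid a \\ z \le p < y}}\Bigl(1 - \frac{\log p}{\log y}\Bigr).
$$
A short combinatorial check, based on $a < y$, shows that if $a \in \cA$ is coprime to $P(z) \defeq \prod_{p < z} p$ and has more than $R$ prime factors counted with multiplicity, then the inequality $\log p_1 + \cdots + \log p_{R+1} \le \log a < \log y$ combined with $\log p_i \ge \log z$ forces the inner sum past $\kappa$, so $W(a) \le 0$. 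Hence
$$
\#\{a \in \cA : a \in \sP_R\}\ \ge\ \sum_{\substack{a \in \cA \\ (a, P(z)) = 1}} W(a).
$$

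\emph{Sieve input.} Expanding $W$ and swapping the order of summation, the right-hand side becomes
$$
S(\cA, z) - \frac{1}{\kappa}\sum_{z \le p < y}\Bigl(1 - \frac{\log p}{\log y}\Bigr) S(\cA_p, z),
$$
where $S(\cdot, z)$ is the usual sifting function and $\cA_p \defeq \{a/p : a \in \cA, p \mid a\}$. I would bound $S(\cA, z)$ from below by the linear sieve in its lower-bound form and each $S(\cA_p, z)$ from above by the matching upper-bound sieve. The moduli that arise in both applications are of the form $d$ or $dp$ with product at most $D$, so the level-of-distribution hypothesis absorbs the cumulative error into $O(X/\log^2 X)$.

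\emph{Main-term optimization.} Replacing $V(z) \defeq \prod_{p < z}(1 - \omega(p)/p)$ by a quantity of size $1/\log z$ via Mertens, and converting the $p$-sum to a Riemann integral, produces a main term of the shape $\frac{X}{\log z}\,\Phi(u, g, R) + o(X/\log X)$, where
$$
\Phi(u, g, R) \defeq f(u) - \frac{1}{gu - R}\int_1^{gu}\Bigl(1 - \frac{t}{gu}\Bigr) F(u - t)\,\frac{\dd t}{t}
$$
and $(f, F)$ are the standard lower/upper linear-sieve functions. The main obstacle is now purely analytic: one must show that $\max_u \Phi(u, g, R) > 0$ whenever $g < R - \delta_R$. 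This is a one-variable variational problem whose outcome defines the numerical constant $\delta_R$; Greaves's contribution is a more intricate weight, augmented by corrective bilinear terms in primes from a second auxiliary range, which shrinks $\delta_R$ to a value admissible for every $R \ge 2$. Once positivity of $\Phi$ is verified for the chosen $u$, the claimed lower bound $\gg_g X/\log X$ follows at once from the main-term estimate.
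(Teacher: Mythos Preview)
The paper does not prove this lemma at all: it is quoted verbatim as \cite[Chapter~5, Proposition~1]{Greaves} and used as a black box. So there is nothing to compare your argument against beyond the citation itself.

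That said, your sketch is a faithful outline of the Richert--Greaves weighted sieve that underlies the cited result. You correctly set up the weight $W(a)$, reduce to a combination of $S(\cA,z)$ and $S(\cA_p,z)$, feed in the linear sieve bounds with total modulus $\le D$ so that the level-of-distribution hypothesis controls the remainder, and isolate the main-term function $\Phi(u,g,R)$ whose positivity for $g<R-\delta_R$ is exactly what Greaves's refined weights establish. One small parameter slip: writing $u\in(1,g)$ is not quite the right window, since $\kappa=gu-R>0$ forces $u>R/g>1$, and the upper constraint on $u$ comes from the sieve functions rather than from $g$; but this does not affect the shape of the argument. The genuinely hard step---the numerical verification that $\delta_R$ can be taken as in Greaves's tables for every $R\ge 2$---you rightly defer to Greaves, which is precisely what the paper does wholesale.
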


\subsection{Type of an irrational number}

For any irrational number $\alpha$, we define its type $\tau=\tau(\alpha)$ by the following definition
$$
\tau\defeq\sup\Big\{t\in\RR:\liminf\limits_{n\to\infty} n^t \|\alpha n\|=0\Big\}.
$$
Using Dirichlet's approximation theorem, one sees
that $\tau\ge 1$ for every irrational number $\alpha$. Thanks to the work of Khinchin \cite{Khin} and Roth \cite{Roth1,Roth2}, it is known that $\tau=1$ for almost all real numbers, in the sense of the Lebesgue measure, and for all irrational
algebraic numbers, respectively. Moreover, if $\alpha$ is an irrational number of type $\tau<\infty$,
then so are $\alpha^{-1}$ and $n\alpha^{-1}$ for all integer $n\ge 1$.

\subsection{Technical lemmas}

We need the following well--known approximation of Vaaler~\cite{Vaal}.

\begin{lemma}
\label{lem:Vaaler}
For any $H\ge 1$, there exist numbers $a_h,b_h$ such that
$$
\bigg|\psi(t)-\sum_{0<|h|\le H}a_h\,\e(th)\bigg|
\le\sum_{|h|\le H}b_h\,\e(th),\qquad
a_h\ll\frac{1}{|h|}\,,\qquad b_h\ll\frac{1}{H}\,.
$$
\end{lemma}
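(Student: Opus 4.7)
My plan is to deduce this approximation from Beurling's extremal majorant of the signum function, following Vaaler's classical construction. The central object is the entire function of exponential type $2\pi$,
\[
B(z)=\Big(\tfrac{\sin\pi z}{\pi}\Big)^{2}\left(\sum_{n=0}^{\infty}\frac{1}{(z-n)^{2}}-\sum_{n=1}^{\infty}\frac{1}{(z+n)^{2}}+\frac{2}{z}\right),
\]
which is the (unique) function of this exponential type satisfying $B(x)\ge\operatorname{sgn}(x)$ on $\RR$ with unit $L^{1}$-defect $\int_{\RR}(B(x)-\operatorname{sgn}(x))\dd x=1$. From $B$, its reflection $B(-\,\cdot\,)$ and the Fej\'er kernel $K(z)=(\sin\pi z/(\pi z))^{2}$ (also of exponential type $2\pi$), one assembles entire functions $V^{\pm}$ of exponential type $2\pi$ whose lattice periodizations will produce trigonometric polynomial majorants and minorants of $\psi$, with $V^{+}-V^{-}$ pointwise dominated by a non-negative multiple of $K$.

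Next I would scale and periodize. Replacing $z$ by $(H+1)z$ raises the exponential type to $2\pi(H+1)$, so the Fourier transforms are supported in $[-(H+1),(H+1)]$; by the Poisson summation formula the periodizations
\[
V^{\pm}_H(t)\defeq\sum_{m\in\ZZ}V^{\pm}\bigl((H+1)(t+m)\bigr)
\]
are trigonometric polynomials of degree $\le H$ in $t$, with $V^{-}_H(t)\le\psi(t)\le V^{+}_H(t)$ and $V^{+}_H-V^{-}_H$ bounded pointwise by a constant multiple of the Fej\'er kernel of order $H$. Setting $\psi^{*}_H=\tfrac{1}{2}(V^{+}_H+V^{-}_H)$ and $J_H=\tfrac{1}{2}(V^{+}_H-V^{-}_H)$ then gives $|\psi(t)-\psi^{*}_H(t)|\le J_H(t)$ for all $t\in\RR$; by oddness of $\psi$ the constant Fourier coefficient of $\psi^{*}_H$ vanishes, yielding the representations $\psi^{*}_H(t)=\sum_{0<|h|\le H}a_h\,\e(ht)$ and $J_H(t)=\sum_{|h|\le H}b_h\,\e(ht)$.

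It then remains to verify the coefficient bounds. Using the explicit Fourier transform of $B$ (which is Lipschitz on $[-1,1]$ and vanishes at the origin), a direct computation yields $a_h=-\tfrac{1}{2\pi ih}\,\phi\bigl(h/(H+1)\bigr)$ with $|\phi|\le 1$ on $[-1,1]$, so $|a_h|\ll 1/|h|$. For the majorant, $\sum_{m\in\ZZ}K\bigl((H+1)(t+m)\bigr)$ equals (up to a factor of $1/(H+1)$) the Fej\'er kernel of order $H$, whose Fourier coefficients are $(H+1)^{-2}(H+1-|h|)\ll 1/H$, giving $b_h\ll 1/H$. The principal obstacle is the very first step: proving that Beurling's function $B$ really is extremal---namely the pointwise bound $B\ge\operatorname{sgn}$ together with the unit $L^{1}$-defect. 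This is non-algebraic and requires either the interpolation identity for entire functions of exponential type $2\pi$ bounded on $\ZZ$, or a contour integration / Phragm\'en--Lindel\"of argument exploiting the exponential-type constraint. Once the extremality of $B$ is in hand, the remaining manipulations are routine Fourier calculus.
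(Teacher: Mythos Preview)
The paper does not prove this lemma; it merely records the statement and cites Vaaler~\cite{Vaal}. What you have sketched is exactly Vaaler's own argument via Beurling's extremal majorant of $\operatorname{sgn}$, so you are supplying the content of the cited reference rather than proposing an alternative.

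Your outline is correct in its architecture and in the coefficient bounds. Two points worth tightening if you write it out: first, the passage ``one assembles entire functions $V^{\pm}$'' hides an integration step (since $\psi$ is essentially an antiderivative of $\tfrac12\operatorname{sgn}$ minus the Dirac comb on $\ZZ$), and it is that integration which produces the factor $-\tfrac{1}{2\pi i h}$ in your formula for $a_h$; second, the Poisson-summation step requires enough decay of $V^{\pm}$ at infinity to justify the periodization, which follows from $B(x)-\operatorname{sgn}(x)\in L^1(\RR)$ together with the Fej\'er envelope. You have correctly flagged the one genuinely non-trivial ingredient, the extremality of Beurling's $B$; once that is granted, the rest is routine Fourier calculus as you say.
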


\begin{lemma}
\label{lem:index}
For a bounded function $f$ and $N'\sim N$, we have
$$
\sum_{N < p\le N'} f(p) \ll \frac{1}{\log N} \max_{N<N_1\le2N}
\bigg|\sum_{N < n \le N_1} \Lambda(n)f(n)\bigg| + N^{1/2}.
$$
\end{lemma}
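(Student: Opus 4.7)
The plan is to pass from the $\Lambda$-weighted sum to the prime sum by the standard two-step procedure: first replace $\Lambda(n)$ by $(\log p)\ind{n=p}$ at the cost of the prime-power contribution, then strip the $\log p$ weight by Abel summation.

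For the first step, I would split the von Mangoldt sum according to whether $n$ is a prime or a proper prime power. Since $f$ is bounded and there are $O(N^{1/k})$ prime powers $p^k$ with $p^k \le 2N$, summing the trivial bound $\log p \le \log(2N)$ over $k \ge 2$ gives
$$
\sum_{N < n \le t} \Lambda(n)f(n) \;=\; \sum_{N < p \le t} (\log p) f(p) \;+\; O\!\left(N^{1/2}\log N\right),
$$
uniformly for $t \in (N,2N]$. Writing $S(t) \defeq \sum_{N < p \le t}(\log p) f(p)$ and letting $M$ denote the maximum appearing on the right-hand side of the claimed bound, this yields $|S(t)| \le M + O(N^{1/2}\log N)$ for all $t \in (N, N']$.

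For the second step, I would remove the $\log p$ weight by applying Abel's partial summation with the smooth factor $1/\log t$:
$$
\sum_{N < p \le N'} f(p) \;=\; \frac{S(N')}{\log N'} \;+\; \int_N^{N'} \frac{S(t)}{t(\log t)^2}\,dt.
$$
The boundary term is $\ll M/\log N + N^{1/2}$, and the integral is bounded by the same quantity using the elementary estimate $\int_N^{N'} \frac{dt}{t(\log t)^2} \le \frac{1}{\log N}$, which conveniently kills the extra $\log N$ factor coming from the prime-power error.

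There is no genuine obstacle here; the only mild bookkeeping point is to verify that the $N^{1/2}\log N$ loss incurred in removing prime powers is tamed by the $1/\log N$ produced by partial summation, so that the final error is only $N^{1/2}$ as stated. Assembling the two steps gives the lemma.
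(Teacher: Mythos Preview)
Your proposal is correct and is exactly the standard argument the paper invokes by citing Graham--Kolesnik \cite[p.~48]{GraKol}: separate primes from higher prime powers (contributing $O(N^{1/2}\log N)$, or $O(N^{1/2})$ with Chebyshev), then remove the $\log p$ weight by partial summation with $1/\log t$, noting that $\int_N^{N'} dt/(t\log^2 t)\le 1/\log N$ absorbs the extra logarithm. There is nothing to add.
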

\begin{proof}
See the argument on page 48 of \cite{GraKol}.
\end{proof}

\begin{lemma}\label{lem:amn}
Suppose that
\begin{equation*}
\alpha=\frac{a}{q}+\frac{\theta}{q^2},
\end{equation*}
with $(a,q)=1, q\geqslant1, |\theta|\leqslant 1$. Then there holds
\begin{equation*}
\sum_{m\leqslant N}\Lambda(m)\mathbf{e}(m\alpha)\ll\big(Nq^{-1/2}+N^{4/5}+N^{1/2}q^{1/2}\big)(\log N)^4.
\end{equation*}
\end{lemma}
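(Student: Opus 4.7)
\medskip

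\noindent\textbf{Proof proposal.} The plan is to apply Vaughan's identity with cutoffs $U,V$ to be chosen. With the usual notation
$$
F(s) = \sum_{m\le U} \Lambda(m) m^{-s}, \qquad G(s) = \sum_{m\le V} \mu(m) m^{-s},
$$
the identity $-\zeta'/\zeta = F - \zeta F G - \zeta' G + (-\zeta'/\zeta - F)(1 - \zeta G)$ translates, after expanding the Dirichlet series and comparing coefficients, into a decomposition of $\Lambda(n)\mathbf{1}_{n>U}$ into four pieces; combined with the trivial contribution $\sum_{n\le U}\Lambda(n)\le U$, this splits $\sum_{m\le N}\Lambda(m)\mathbf{e}(m\alpha)$ into sums of two shapes. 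The Type I sums are of the form
$$
S_{\mathrm I} = \sum_{k\le K} a_k \sum_{\ell \le N/k} \mathbf{e}(k\ell\alpha),\qquad |a_k|\ll\log N,\ K\le UV,
$$
and the Type II sums are of the form
$$
S_{\mathrm{II}} = \sum_{U<k\le N/V}\sum_{V<\ell\le N/k} a_k b_\ell \mathbf{e}(k\ell\alpha),\qquad |a_k|,|b_\ell|\ll\log N.
$$

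For $S_{\mathrm I}$, evaluate the inner geometric sum to obtain $\sum_{\ell\le N/k}\mathbf{e}(k\ell\alpha)\ll\min(N/k,\|k\alpha\|^{-1})$, and then apply the standard divisor-type estimate
$$
\sum_{k\le K}\min\!\left(\frac{N}{k},\frac{1}{\|k\alpha\|}\right) \ll \left(\frac{N}{q}+K+q+N\frac{\log qK}{q}\right)\log(2qK),
$$
which follows from the Diophantine hypothesis $\alpha = a/q + \theta/q^2$ by splitting $k$ into blocks of length $q$ and using that $\|k\alpha\|$ takes each value in $\{r/q:1\le r\le q/2\}$ at most $O(1)$ times per block. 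This yields $S_{\mathrm I}\ll (N/q + UV + q)(\log N)^2$.

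For $S_{\mathrm{II}}$, apply Cauchy--Schwarz in the $k$-variable to get
$$
|S_{\mathrm{II}}|^2 \ll (\log N)^2\,\frac{N}{V}\sum_{V<\ell_1,\ell_2\le N/U}\,\biggl|\sum_{k\le N/\max(\ell_1,\ell_2)}\mathbf{e}\bigl(k(\ell_1-\ell_2)\alpha\bigr)\biggr|,
$$
then separate the diagonal $\ell_1=\ell_2$ and bound the off-diagonal terms by $\min(N/\ell,\|\ell\alpha\|^{-1})$ with $\ell = |\ell_1-\ell_2|$. Applying the same minimum-sum estimate above now with $K = N/U$ gives
$$
S_{\mathrm{II}}\ll \Bigl(N q^{-1/2} + N U^{-1/2} + N V^{-1/2} + N^{1/2}(UV)^{1/2} + N^{1/2}q^{1/2}\Bigr)(\log N)^3.
$$

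Finally, choose $U=V=N^{2/5}$, so that $UV = N^{4/5}$ and $NU^{-1/2}=N^{4/5}$, and combine the Type I bound with the Type II bound. The three dominant contributions $Nq^{-1/2}$, $N^{4/5}$, and $N^{1/2}q^{1/2}$ absorb the rest, and the logarithmic losses accumulate to at most $(\log N)^4$, yielding the claimed estimate. The main technical nuisance will be bookkeeping of the cutoffs in Vaughan's decomposition (so as not to lose additional logarithmic factors) and careful balancing of $U$ and $V$ to optimise the Type I/Type II trade-off; both are standard and present no substantial obstacle.
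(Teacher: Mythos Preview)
Your approach is exactly the one the paper invokes: the lemma is quoted from Chapter~25 of Davenport, and Davenport's argument is precisely Vaughan's identity with $U=V=N^{2/5}$ followed by the standard Type~I/Type~II estimates you describe. So at the level of strategy there is nothing to add.

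There is, however, a slip in your Type~II bound that as written breaks the argument. You state
\[
S_{\mathrm{II}}\ll \Bigl(N q^{-1/2} + N U^{-1/2} + N V^{-1/2} + N^{1/2}(UV)^{1/2} + N^{1/2}q^{1/2}\Bigr)(\log N)^3,
\]
but with $U=V=N^{2/5}$ the term $N^{1/2}(UV)^{1/2}$ equals $N^{9/10}$, which is \emph{not} absorbed by $N^{4/5}$; your final claim that ``the three dominant contributions \dots\ absorb the rest'' is therefore false as stated. In the correct treatment this term does not appear. The point is that one first restricts $k$ to a dyadic range $k\sim M$ with $U\le M\le N/V$ before applying Cauchy--Schwarz; then the diagonal $\ell_1=\ell_2$ contributes $\ll M\cdot (N/M)=N$ to the inner double sum, giving $|S_{\mathrm{II}}(M)|^2\ll M\cdot N(\log N)^c$ and hence $|S_{\mathrm{II}}(M)|\ll (NM)^{1/2}(\log N)^{c/2}\le NV^{-1/2}(\log N)^{c/2}$. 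The off-diagonal, handled with $\sum_{0<|j|\le N/M}\min(M,\|j\alpha\|^{-1})\ll (N/(Mq)+1)(M+q\log q)$, produces only the terms $Nq^{-1/2}$, $NU^{-1/2}$, $NV^{-1/2}$, $(Nq)^{1/2}$. Summing over the $O(\log N)$ dyadic blocks costs one more logarithm. If instead you apply Cauchy to the full range $U<k\le N/V$ at once and bound the inner $k$-sum on the off-diagonal crudely by $N/V$, you pick up exactly the spurious $N^{3/2}(U^{1/2}V)^{-1}=N^{1/2}(UV)^{1/2}$ you recorded. So insert the dyadic split (or tighten the off-diagonal count) and the proof goes through.
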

\begin{proof}
See Chapter 25 of Davenport \cite{Daven}.
\end{proof}

\begin{lemma}\label{lem:ttr}
Suppose that $a$ is a fixed irrational number of finite type $\tau < \infty$. Let $h\geqslant1$ and $m$ be integers.
Then we have
\begin{equation*}
\sum_{m\leqslant M}\Lambda(m)\mathbf{e}(ahm) \ll h^{1/2} M^{1-1/(2\tau) + \varepsilon} + M^{1-\eps}.
\end{equation*}
\end{lemma}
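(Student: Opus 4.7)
The plan is to combine Dirichlet's Diophantine approximation theorem with Lemma~\ref{lem:amn} and the hypothesis that $a$ has finite type $\tau$.

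First, I would introduce an auxiliary parameter $Q\ge 1$ (to be optimised at the end) and apply Dirichlet's theorem to $ah$: there exist coprime integers $a_0,q$ with $1\le q\le Q$ and $|qah-a_0|\le 1/Q$. Writing this as $ah=a_0/q+\theta/q^2$ with $|\theta|\le q/Q\le 1$ puts us in the form required by Lemma~\ref{lem:amn}, which then yields
\begin{equation*}
\sum_{m\le M}\Lambda(m)\e(ahm)\ll \bigl(Mq^{-1/2}+M^{4/5}+M^{1/2}q^{1/2}\bigr)(\log M)^{4}.
\end{equation*}

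Second, I would translate the Dirichlet inequality into a lower bound on $q$ using the finite-type hypothesis. The relation $|qah-a_0|\le 1/Q$ is the statement $\|qha\|\le 1/Q$, and since $a$ is of finite type $\tau$ it follows from the definition that, for every fixed $\eta>0$, one has $\|na\|\gg_{a,\eta}n^{-\tau-\eta}$ uniformly in $n\ge 1$ (combining the asymptotic behaviour for large $n$ with the trivial fact that $\|na\|>0$ on any finite range). Specialising to $n=qh$ gives
\begin{equation*}
q\gg Q^{1/(\tau+\eta)}h^{-1},\qquad\text{hence}\qquad q^{-1/2}\ll h^{1/2}Q^{-1/(2(\tau+\eta))}.
\end{equation*}

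Third, I would choose $Q=M^{1-2\eps'}$ with $\eps'>0$ small. The upper bound $q\le Q$ then forces $M^{1/2}q^{1/2}\le M^{1-\eps'}$, while the lower bound on $q$ gives $Mq^{-1/2}\ll h^{1/2}M^{1-(1-2\eps')/(2(\tau+\eta))}$. Since the second exponent tends to $1-1/(2\tau)$ as $\eps',\eta\to 0$, a suitable choice of $(\eps',\eta)$ in terms of the $\eps$ appearing in the lemma brings this exponent below $1-1/(2\tau)+\eps$ and the other below $1-\eps$. The auxiliary term $M^{4/5}$ is absorbed by $M^{1-\eps}$ as soon as $\eps<1/5$, and the factor $(\log M)^{4}$ is absorbed into $M^{\eps}$.

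The most delicate step is the parameter juggling in the last paragraph: one must simultaneously match the exponents $1-1/(2\tau)+\eps$ and $1-\eps$ while keeping $\eta$ small enough that the finite-type inequality is essentially $\|na\|\gg n^{-\tau}$. This is routine once $\eps,\eps',\eta$ are treated as independent positive quantities that can be shrunk as needed, but it is the only place where genuine care is required; no new idea is needed beyond the Dirichlet / Vinogradov / finite-type trio.
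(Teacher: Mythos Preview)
Your proposal is correct and follows essentially the same route as the paper's proof. The only cosmetic difference is that the paper phrases the Diophantine input as ``take the last continued-fraction convergent $b/d$ of $ah$ with $d\le M^{1-\eta}$'' rather than invoking Dirichlet's theorem with a parameter $Q$; these are equivalent, and both proofs then combine the resulting approximation with the finite-type lower bound $\|na\|\gg n^{-\tau-\eta}$ (applied at $n=hd$, respectively $n=hq$) and Lemma~\ref{lem:amn}, with the same final parameter adjustment.
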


\begin{proof}
For any sufficiently small $\varepsilon>0$, we set $\varrho=\tau+\varepsilon$. Since $a$ is of type $\tau$, there exists some constant $\mathfrak{c}>0$ such that
\begin{equation}\label{eq:dis}
\|an\|>\mathfrak{c}n^{-\varrho},\qquad n\geqslant 1.
\end{equation}
For given $h$ with $0<h\leqslant H$, let $b/d$ be the convergent in the continued fraction expansion of $ah$, which has the largest denominator $d$ not exceeding $M^{1-\eta}$ for a sufficiently small positive number $\eta$. Then we derive that
\begin{equation}\label{eq:apr}
\bigg|ah-\frac{b}{d}\bigg|\leqslant\frac{1}{dM^{1-\eta}}\leqslant\frac{1}{d^2},
\end{equation}
which combined with \eqref{eq:dis} yields
$$
M^{-1+\eta}\geqslant|ahd-b|\geqslant\|ahd\|>\mathfrak{c}(hd)^{-\varrho}.
$$
Taking $C_0:=\mathfrak{c}^{1/\varrho}$, we obtain
\begin{equation}\label{eq:d}
d > C_0h^{-1}M^{1/\varrho - \eta/\varrho}.
\end{equation}
Combining~\eqref{eq:apr} and~\eqref{eq:d}, applying Lemma~\ref{lem:amn} and the fact that $d \le M^{1-\eta}$, we deduce that
\begin{align*}
\sum_{m \leqslant M}\Lambda(m)\mathbf{e}(ahm)
&\ll \big( M d^{-1/2} + M^{4/5} + M^{1/2} d^{1/2} \big) (\log M)^4 \\
&\ll \big( h^{1/2} M^{1-1/(2\varrho)+\eta/(2\varrho)}+ M^{4/5} +  M^{1-\eta/2} \big) (\log M)^4 \\
&\ll h^{1/2} M^{1-1/(2\tau) + \varepsilon} + M^{1-\eps}.
\end{align*}
This completes the proof of Lemma \ref{lem:ttr}.
\end{proof}

The following lemma gives a characterization of the numbers in the Beatty sequence $\cB_{\alpha, \beta}$.
\begin{lemma}\label{lem:Beatty}
A natural number $m$ has the form $\fl{\alpha n + \beta}$ if and only if $\chi_{\alpha, \beta} (m) = 1$, where $\chi_{\alpha, \beta} (m) \defeq \fl{-\alpha^{-1} (m-\beta)} - \fl{-\alpha^{-1} (m+1-\beta)}$.	
\end{lemma}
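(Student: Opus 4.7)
The plan is to unpack the defining condition $m = \fl{\alpha n + \beta}$ into an interval condition on $n$, rewrite this as a counting-of-integers statement, and then identify the resulting count with the expression $\chi_{\alpha,\beta}(m)$ given in the lemma.

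First I would note that $m = \fl{\alpha n + \beta}$ for some integer $n$ is equivalent to the double inequality $m \le \alpha n + \beta < m+1$. Dividing by $\alpha > 0$, this becomes $\alpha^{-1}(m-\beta) \le n < \alpha^{-1}(m+1-\beta)$. Hence $m$ lies in $\cB_{\alpha,\beta}$ if and only if the half-open interval $I_m \defeq [\alpha^{-1}(m-\beta),\,\alpha^{-1}(m+1-\beta))$ contains at least one integer. Since $\alpha > 1$, the interval $I_m$ has length $\alpha^{-1} < 1$, so it contains either zero or exactly one integer.

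Next I would convert the existence of an integer in $I_m$ into a floor-difference expression. Writing $u \defeq \alpha^{-1}(m-\beta)$ and $v \defeq \alpha^{-1}(m+1-\beta)$, an integer $n$ satisfies $u \le n < v$ if and only if $-n$ satisfies $-v < -n \le -u$, i.e., if and only if the half-open interval $(-v,\,-u]$ contains an integer. By the standard counting identity $\#\{k \in \ZZ : a < k \le b\} = \fl{b} - \fl{a}$ valid for all reals $a \le b$, the number of integers in $(-v,\,-u]$ equals $\fl{-u} - \fl{-v}$, which is precisely the definition of $\chi_{\alpha,\beta}(m)$. Combining the two observations gives $m \in \cB_{\alpha,\beta}$ iff $\chi_{\alpha,\beta}(m) \ge 1$, and the length bound forces $\chi_{\alpha,\beta}(m) \in \{0,1\}$, so the lemma follows.

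The argument is really a bookkeeping identity and presents no substantive obstacle; the only point where a slip would be easy is tracking the open versus closed endpoints when negating $I_m$, since the counting identity $\fl{b}-\fl{a}$ is tailored to the half-open form $(a,b]$. I would also remark in passing that for $m$ sufficiently large any integer $n$ witnessing $m \in \cB_{\alpha,\beta}$ is automatically positive, which is all that is needed for the subsequent application in Theorem \ref{thm:main}.
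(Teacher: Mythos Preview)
Your proof is correct and follows essentially the same approach as the paper: the paper's own proof simply records the equivalence $m=\fl{\alpha n+\beta}\iff \alpha^{-1}(m-\beta)\le n<\alpha^{-1}(m+1-\beta)$ and leaves the remaining bookkeeping to the reader, which is exactly the interval-counting identification you carry out in full.
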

\begin{proof}
Note that an integer $m$ has the form $m= \fl{\alpha n+\beta} $ for some integer $n$ if and only if
\begin{equation*}
\frac{m-\beta}{\alpha}\leqslant n<\frac{m-\beta+1}{\alpha}.
\end{equation*}	
This completes the proof of Lemma \ref{lem:Beatty}.
\end{proof}

Finally, we use the following lemma, which provides a characterization of the numbers that
occur in the Piatetski--Shapiro sequence $\sN^{(c)}$.

\begin{lemma}
\label{lem:PS}
A natural number $m$ has the form $\fl{n^c}$ if and only if $\chi^{(c)}(m) = 1$, where
$\chi^{(c)}(m) \defeq \fl{-m^\gamma} - \fl{-(m+1)^\gamma}$.  Moreover,
$$
\chi^{(c)}(m)=\gamma m^{\gamma-1} + \psi(-(m+1)^\gamma) - \psi(-m^\gamma) + O(m^{\gamma-2}).
$$
\end{lemma}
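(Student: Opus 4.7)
The plan is to split the proof into the characterization (the iff) and the asymptotic identity, both of which flow from translating the condition $m=\fl{n^c}$ into an integer--counting problem. Since $\gamma=c^{-1}\in(0,1)$, the map $x\mapsto x^{\gamma}$ is strictly increasing on $(0,\infty)$, so $m=\fl{n^c}$ is equivalent to $m\le n^{c}<m+1$, which in turn is equivalent to $m^{\gamma}\le n<(m+1)^{\gamma}$. Thus $m$ lies in the Piatetski--Shapiro sequence $\sN^{(c)}$ if and only if the half--open interval $[m^{\gamma},(m+1)^{\gamma})$ contains a positive integer.

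Next I would observe that the number of integers in any half--open interval $[a,b)$ is $\rf{b}-\rf{a}$, and that $\rf{x}=-\fl{-x}$. Applied to $a=m^{\gamma}$ and $b=(m+1)^{\gamma}$, this count is precisely
$$
\fl{-m^{\gamma}}-\fl{-(m+1)^{\gamma}}=\chi^{(c)}(m).
$$
A one--line mean--value estimate gives $(m+1)^{\gamma}-m^{\gamma}=\gamma\xi^{\gamma-1}<1$ for all $m\ge1$, since $\gamma<1$ and $\xi\in(m,m+1)\subset[1,\infty)$. Hence the interval contains at most one integer, so $\chi^{(c)}(m)\in\{0,1\}$ and takes the value $1$ exactly when $m\in\sN^{(c)}$, proving the characterization.

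For the asymptotic formula I would substitute $\fl{t}=t-\psi(t)-\tfrac12$ into each floor appearing in $\chi^{(c)}(m)$. The two $-\tfrac12$ constants cancel and the linear terms reassemble to give
$$
\chi^{(c)}(m) = (m+1)^{\gamma} - m^{\gamma} + \psi\bigl(-(m+1)^{\gamma}\bigr) - \psi(-m^{\gamma}).
$$
A Taylor expansion of $f(x)=x^{\gamma}$ at $x=m$, namely $(m+1)^{\gamma}=m^{\gamma}+\gamma m^{\gamma-1}+\tfrac{\gamma(\gamma-1)}{2}\eta^{\gamma-2}$ for some $\eta\in(m,m+1)$, then yields $(m+1)^{\gamma}-m^{\gamma}=\gamma m^{\gamma-1}+O(m^{\gamma-2})$, which inserts into the display to produce the stated expansion.

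There is no real obstacle here: both halves reduce to routine manipulations with floors, the sawtooth identity, and a Taylor expansion. The only point that warrants a moment of verification is the integer--counting identity $\#\bigl([a,b)\cap\ZZ\bigr)=\rf{b}-\rf{a}$ in the borderline case in which $b=(m+1)^{\gamma}$ happens to be an integer; the identity still holds, because $b$ itself is excluded from the interval and in this case $\rf{b}=b$.
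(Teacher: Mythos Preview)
Your proof is correct and follows exactly the approach the paper intends: the paper simply says the proof is ``similar to that of Lemma~\ref{lem:Beatty}'' and omits the details, and your argument---rewriting $m=\fl{n^c}$ as $m^\gamma\le n<(m+1)^\gamma$, counting integers via $\rf{x}=-\fl{-x}$, and then expanding $(m+1)^\gamma-m^\gamma$ by Taylor---is precisely that omitted argument spelled out in full.
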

\begin{proof}
The proof of Lemma \ref{lem:PS} is similar to that of Lemma \ref{lem:Beatty}, so we omit the details herein.
\end{proof}

\begin{lemma}
For $1<c<\frac{2817}{2426}$, there holds
\begin{equation} \label{eq:PSthm}
\pi^{(c)}(x) = \sum_{p \le x} \chi^{(c)}(p) = \frac{x^\gamma}{\log x}
+ O\bigg(\frac{x^\gamma}{\log^2 x}\bigg).
\end{equation}
\end{lemma}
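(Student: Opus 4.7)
The plan is to combine the pointwise formula of Lemma~\ref{lem:PS} with the prime number theorem and sharp bounds for exponential sums over primes with phase $hp^\gamma$. Lemma~\ref{lem:PS} gives the decomposition
$$
\sum_{p\le x}\chi^{(c)}(p)=\gamma\sum_{p\le x}p^{\gamma-1}+\sum_{p\le x}\bigl(\psi(-(p+1)^\gamma)-\psi(-p^\gamma)\bigr)+O\bigl(x^{\gamma-1}\bigr).
$$
Partial summation applied together with the prime number theorem with classical error term turns the first sum into $x^\gamma/\log x+O(x^\gamma/\log^2 x)$, which furnishes the asserted main term. It therefore suffices to establish that the sawtooth sum
$$
S(x)\defeq\sum_{p\le x}\bigl(\psi(-(p+1)^\gamma)-\psi(-p^\gamma)\bigr)
$$
satisfies $S(x)\ll_A x^\gamma/\log^A x$ for every fixed $A>0$.

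After a dyadic decomposition $N<p\le N_1\le 2N$ with $N\le x$, I would apply Lemma~\ref{lem:Vaaler} with a parameter $H$ (a small power of $N$) to approximate each instance of $\psi(-y^\gamma)$ by a trigonometric polynomial of length $H$. This costs an admissible error of order $N^\gamma/H$ and reduces matters to bounding, uniformly in $h$ with $1\le h\le H$, the exponential sums $\sum_{N<p\le N_1}\e(hp^\gamma)$. By Lemma~\ref{lem:index}, these prime sums are controlled, up to the negligible error $O(N^{1/2})$, by the von Mangoldt sums
$$
T(h,N)\defeq\sum_{N<n\le N_1}\Lambda(n)\e(hn^\gamma).
$$

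To bound $T(h,N)$ I would apply Heath--Brown's identity (or a Vaughan decomposition) to split $\Lambda$ into $O(\log^{10}N)$ bilinear forms of types~I and~II. Each resulting bilinear sum carries the smooth phase $t\mapsto ht^\gamma$, whose $k$th derivative is of the expected order $hN^{\gamma-k}$; consequently the type~II sums can be treated through iterations of the van der Corput $A$-- and $B$--processes combined with higher--derivative tests, while the type~I sums fall to Weyl's inequality together with the first--derivative test. Balancing the cutoff between the type~I and type~II ranges, and choosing $H=N^\eta$ for a sufficiently small $\eta>0$, yields a saving $T(h,N)\ll N^{1-\delta}$ with some $\delta=\delta(c)>0$ precisely when $c<2817/2426$. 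Summing over $h$ and over the dyadic ranges of $N$ then delivers $S(x)\ll x^\gamma/\log^A x$ and the theorem follows.

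The main obstacle, and the reason we merely quote this statement rather than reprove it, lies in extracting the sharp numerical exponent $c_0=2817/2426$: this rests on the delicate bilinear optimisation of Rivat and Sargos~\cite{RiSa}, whose bookkeeping is substantially more intricate than the sketch above. The remaining ingredients (Vaaler approximation, Lemma~\ref{lem:index}, Heath--Brown's identity, and the PNT with classical error term) are standard and input essentially no arithmetic beyond partial summation.
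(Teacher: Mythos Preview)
Your proposal is correct in spirit and, like the paper, ultimately defers to Rivat and Sargos~\cite{RiSa} for the sharp threshold $2817/2426$; the paper's own ``proof'' is simply a one-line citation of that theorem. The additional sketch you supply (Vaaler approximation, passage to $\Lambda$-sums via Lemma~\ref{lem:index}, Heath--Brown decomposition, and van der Corput estimates for the resulting bilinear sums) accurately describes the architecture of the Rivat--Sargos argument, so there is no substantive difference in approach.
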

\begin{proof}
See Theorem 1 of Rivat and Sargos \cite{RiSa}.	
\end{proof}

\subsection{Upper bound estimation of exponential sums}
We begin with the decomposition of the von Mangoldt function by Heath--Brown.

\begin{lemma}\label{Heath-Brown-identity}
	Let $z\geqslant1$ and $k\geqslant1$. Then, for any $n\leqslant2z^k$, there holds
	\begin{equation*}
		\Lambda(n)=\sum_{j=1}^k(-1)^{j-1}\binom{k}{j}\mathop{\sum\cdots\sum}_{\substack{n_1n_2\cdots n_{2j}=n\\
				n_{j+1},\dots,n_{2j}\leqslant z }}(\log n_1)\mu(n_{j+1})\cdots\mu(n_{2j}).
	\end{equation*}
\end{lemma}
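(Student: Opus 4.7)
The plan is to derive Heath--Brown's identity by comparing Dirichlet coefficients in a suitable generating--function identity. First I would introduce the truncated M\"obius series
$$
F(s) := \sum_{m \le z} \mu(m) m^{-s},
$$
and observe that the inner multiple sum on the right--hand side of the stated formula is precisely the coefficient of $n^{-s}$ in the Dirichlet product $(-\zeta'(s))\,\zeta(s)^{j-1}\,F(s)^{j}$: the factor $-\zeta'(s)$ contributes the weight $\log n_{1}$, the $j-1$ copies of $\zeta(s)$ generate the unconstrained variables $n_{2},\ldots,n_{j}$, and $F(s)^{j}$ generates the $\mu$--weighted variables $n_{j+1},\ldots,n_{2j}$ restricted to $[1,z]$.

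Next I would apply the binomial expansion $(1-x)^{k} = 1 + \sum_{j=1}^{k}(-1)^{j}\binom{k}{j}x^{j}$ with $x=\zeta(s)F(s)$ and multiply both sides by $-\zeta'(s)/\zeta(s)$. After rearrangement, this yields the algebraic identity
$$
-\frac{\zeta'(s)}{\zeta(s)} \;=\; \sum_{j=1}^{k}(-1)^{j-1}\binom{k}{j}\,(-\zeta'(s))\,\zeta(s)^{j-1}\,F(s)^{j} \;-\; \frac{\zeta'(s)}{\zeta(s)}\,\bigl(1-\zeta(s)F(s)\bigr)^{k}.
$$
The whole lemma is then reduced to showing that the last term on the right--hand side contributes $0$ to the coefficient of $n^{-s}$ for every $n \le 2z^{k}$.

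For that final step I would set $G(s):=1-\zeta(s)F(s)$ and appeal to the M\"obius relation $\sum_{d\mid n}\mu(d)=0$ for $n>1$: since every divisor of $n$ lies in $[1,z]$ whenever $n\le z$, this forces the coefficient of $n^{-s}$ in $G(s)$ to vanish for all $n\le z$, so the Dirichlet coefficients of $G(s)$ are supported on $n>z$. A short convolution argument (or induction on $k$) then shows that the coefficients of $G(s)^{k}$ are supported on $n>z^{k}$. Since the coefficients of $-\zeta'(s)/\zeta(s)=\sum_{a\ge 2}\Lambda(a)\,a^{-s}$ are supported on $a\ge 2$, the Dirichlet coefficients of $-(\zeta'/\zeta)\,G^{k}$ must be supported on $n>2z^{k}$. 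Comparing coefficients of $n^{-s}$ on both sides of the algebraic identity in the range $n\le 2z^{k}$ then delivers the stated formula for $\Lambda(n)$. The only step requiring any real care is the support analysis for $G(s)^{k}$; the remainder of the argument is bookkeeping.
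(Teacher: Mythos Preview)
Your argument is correct and is precisely the classical derivation due to Heath--Brown; the paper does not give its own proof but simply refers the reader to pp.~1366--1367 of \cite{HB}, where exactly this generating--function computation appears. In other words, you have reconstructed the cited proof rather than found an alternative to it.
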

\begin{proof}
	See the arguments on pp. 1366--1367 of Heath--Brown \cite{HB}.
\end{proof}

\begin{lemma}\label{compare}
	Suppose that
	\begin{equation*}
		L(H)=\sum_{i=1}^mA_iH^{a_i}+\sum_{j=1}^nB_jH^{-b_j},
	\end{equation*}
	where~$A_i,\,B_j,\,a_i$~and~$b_j$~are positive. Assume further that $H_1\leqslant H_2$. Then there exists
	some $\mathscr{H}$ with
	$H_1\leqslant\mathscr{H}\leqslant H_2$ and
	\begin{equation*}
		L(\mathscr{H})\ll \sum_{i=1}^mA_iH_1^{a_i}+\sum_{j=1}^nB_jH_2^{-b_j}+\sum_{i=1}^m\sum_{j=1}^n\big(A_i^{b_j}B_j^{a_i}\big)^{1/(a_i+b_j)}.
	\end{equation*}
	The implied constant depends only on $m$ and $n$.
\end{lemma}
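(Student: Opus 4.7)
The plan is to establish this standard balancing lemma by induction on the total number of summands $m+n$, exploiting the simple monotonicity of the individual terms. Before starting the induction, for each pair $(i,j)$ introduce the crossover point
$$
H_{ij}\defeq (B_j/A_i)^{1/(a_i+b_j)},
$$
which is the unique positive $H$ where $A_i H^{a_i} = B_j H^{-b_j}$, both equal there to
$$
V_{ij}\defeq (A_i^{b_j} B_j^{a_i})^{1/(a_i+b_j)}.
$$
The whole argument rests on the following monotonicity principle: if $H\le H_{ij}$, then $A_i H^{a_i}\le V_{ij}$; if $H\ge H_{ij}$, then $B_j H^{-b_j}\le V_{ij}$.

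I would handle the base case $m=n=1$ directly by comparing $H_{11}$ with $[H_1,H_2]$: take $\mathscr{H}$ to be $H_1$, $H_{11}$, or $H_2$ according to whether $H_{11}<H_1$, $H_{11}\in[H_1,H_2]$, or $H_{11}>H_2$. In each subcase, one side of the monotonicity principle yields the cross-term $V_{11}$ and the other side reproduces either $A_1 H_1^{a_1}$ or $B_1 H_2^{-b_1}$, matching the three allowed RHS types. For the inductive step, I would strip off the term $A_m H^{a_m}$, apply the inductive hypothesis to the reduced sum on $[H_1,H_2]$ to obtain a candidate $\mathscr{H}_0$, and then either accept $\mathscr{H}=\mathscr{H}_0$ (if $A_m \mathscr{H}_0^{a_m}$ is already dominated by the existing bound, e.g.\ because $\mathscr{H}_0$ is close to $H_1$) or shift $\mathscr{H}$ to a pairwise crossover $H_{m,j}\in[H_1,H_2]$ and pick up a single new balancing contribution $V_{m,j}$ via the monotonicity principle.

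The main obstacle is the bookkeeping in the inductive step: once $\mathscr{H}$ is moved off $\mathscr{H}_0$ to accommodate the new term $A_m H^{a_m}$, one has to verify that the bounds on the previously treated terms are not destroyed, and that each cross-term $V_{m,j}$ appears at most once in the final tally. A cleaner, non-inductive route that I would prefer is the following: take $\mathscr{H}$ to be a minimizer of $L$ over the finite candidate set
$$
\{H_1, H_2\}\cup\{H_{ij}\in[H_1,H_2] : 1\le i\le m,\ 1\le j\le n\},
$$
and then for each growing term $A_i \mathscr{H}^{a_i}$ pick the smallest $H_{ij}\ge\mathscr{H}$ (if it lies in $[H_1,H_2]$) and use the monotonicity principle to bound $A_i\mathscr{H}^{a_i}\le V_{ij}$; if no such $H_{ij}$ exists then every $H_{ij}<H_1$, which forces $\mathscr{H}=H_1$ by the structure of the candidate set and gives $A_i H_1^{a_i}$ directly. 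A dual argument dispatches the decaying terms with either $B_j H_2^{-b_j}$ or $V_{ij}$. Summing these termwise bounds over $i$ and $j$ produces exactly the claimed inequality, with implied constant depending only on $m+n$.
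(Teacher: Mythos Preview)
The paper does not actually prove this lemma; it simply cites Lemma~3 of Srinivasan~\cite{Srin}. So there is no in-paper argument to compare against, and your proposal must be judged on its own.

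Your inductive sketch is left unresolved: you correctly identify that moving $\mathscr{H}$ off $\mathscr{H}_0$ may destroy the bounds already obtained, and you do not explain how to control this. Your ``cleaner'' direct route has a concrete gap. The parenthetical restriction ``(if it lies in $[H_1,H_2]$)'' is harmful: when $H_{ij}>H_2\ge\mathscr{H}$ one still has $A_i\mathscr{H}^{a_i}\le V_{ij}$, so that case should not be excluded. More seriously, the assertion ``if no such $H_{ij}$ exists then every $H_{ij}<H_1$, which forces $\mathscr{H}=H_1$ by the structure of the candidate set'' is false. Take $m=2$, $n=1$ and
\[
L(H)=10\,H^{0.01}+\tfrac14 H+H^{-1}\quad\text{on }[H_1,H_2]=[0.5,10].
\]
Here $H_{11}=(1/10)^{1/1.01}\approx 0.10<H_1$ while $H_{21}=2\in[H_1,H_2]$, and one checks $L(0.5)\approx 12.06$, $L(2)\approx 11.07$, $L(10)\approx 12.83$, so the minimiser over your candidate set $\{0.5,2,10\}$ is $\mathscr{H}=2\ne H_1$. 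For $i=1$ every $H_{1j}$ lies below both $H_1$ and $\mathscr{H}$, yet your scheme produces no bound for $A_1\mathscr{H}^{a_1}$.

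A standard repair is to abandon the term-by-term bounding and argue globally. Put $P(H)=\max_i A_iH^{a_i}$ (increasing) and $Q(H)=\max_j B_jH^{-b_j}$ (decreasing). If $P(H_1)\ge Q(H_1)$ take $\mathscr{H}=H_1$; if $P(H_2)\le Q(H_2)$ take $\mathscr{H}=H_2$; otherwise pick $\mathscr{H}\in[H_1,H_2]$ with $P(\mathscr{H})=Q(\mathscr{H})$, at which point the maxima are realised by some $i_0,j_0$ and hence $\mathscr{H}=H_{i_0j_0}$ with common value $V_{i_0j_0}$. In every case $L(\mathscr{H})\le mP(\mathscr{H})+nQ(\mathscr{H})$ is at most $(m+n)$ times a single summand of the claimed right-hand side.
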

\begin{proof}
	See Lemma 3 of Srinivasan \cite{Srin}.
\end{proof}

For a real number $m_1$, the sum of the form
\begin{equation*}
	\mathop{\sum_{k\sim K}\sum_{\ell\sim L}}_{KL\asymp x}a_kb_\ell\mathbf{e}\big(jd^{-1}(k\ell)^\gamma+m_1k\ell\big)
\end{equation*}
with $|a_k|\ll x^\varepsilon,|b_\ell|\ll x^\varepsilon$ for every fixed $\varepsilon>0$, it is usually called a ``Type I'' sum, denoted by $S_I(K,L)$, if $b_\ell=1$ or $b_\ell=\log\ell$; otherwise it is called a ``Type II'' sum, denoted by $S_{II}(K,L)$.

\begin{lemma}\label{derivative-estimate}
	Suppose that $f(x):[a, b]\to \mathbb{R}$ has continuous derivatives of arbitrary
	order on $[a,b]$, where $1\leqslant a<b\leqslant2a$. Suppose further that
	\begin{equation*}
		\big|f^{(j)}(x)\big|\asymp \lambda_j,\qquad j\geqslant1, \qquad x\in[a, b].
	\end{equation*}
	Then we have
	\begin{equation}\label{2nd-deri-estimate}
		\sum_{a<n\leqslant b}e\big(f(n)\big)\ll a\lambda_2^{1/2}+\lambda_2^{-1/2},
	\end{equation}
	and
	\begin{equation}\label{3rd-deri-estimate}
		\sum_{a<n\leqslant b}e\big(f(n)\big)\ll a\lambda_3^{1/6}+\lambda_3^{-1/3}.
	\end{equation}
\end{lemma}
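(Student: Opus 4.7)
Both inequalities are the classical second- and third-derivative tests of van der Corput, and my plan is to establish \eqref{2nd-deri-estimate} first and then derive \eqref{3rd-deri-estimate} from it by a single application of the Weyl--van der Corput A-process.

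For \eqref{2nd-deri-estimate} I would argue via stationary phase after truncated Poisson summation. Because $|f''(x)|\asymp \lambda_2$ is one-signed, the derivative $f'$ is monotone on $[a,b]$ with total variation $f'(b)-f'(a)\asymp a\lambda_2$. Writing
\begin{equation*}
\sum_{a<n\le b}\e(f(n))\approx \sum_{\nu\in\ZZ}\int_a^b \e\big(f(x)-\nu x\big)\dd x,
\end{equation*}
only the $O(a\lambda_2+1)$ values of $\nu$ inside $[f'(a),f'(b)]$ produce a stationary point $x_\nu$, and each such integral is bounded by $\ll\lambda_2^{-1/2}$ via the standard stationary-phase lemma for quadratic phases. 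Values of $\nu$ outside this range contribute negligibly by Kusmin--Landau / repeated integration by parts. Summing yields $\ll(a\lambda_2+1)\lambda_2^{-1/2}=a\lambda_2^{1/2}+\lambda_2^{-1/2}$, as claimed.

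For \eqref{3rd-deri-estimate}, set $N\defeq b-a\asymp a$ and apply the Weyl--van der Corput inequality with a parameter $H\in[1,N]$ to obtain
\begin{equation*}
\Big|\sum_{a<n\le b}\e(f(n))\Big|^2\ll\frac{N^2}{H}+\frac{N}{H}\sum_{h=1}^{H}\Big|\sum_n \e\big(g_h(n)\big)\Big|,
\end{equation*}
where $g_h(x)\defeq f(x+h)-f(x)$ and $n$ runs through an interval of length $\asymp N$. Since $g_h''(x)=\int_x^{x+h}f'''(u)\dd u\asymp h\lambda_3$, the already-proved bound \eqref{2nd-deri-estimate} applied to $g_h$ gives $|\sum_n\e(g_h(n))|\ll N(h\lambda_3)^{1/2}+(h\lambda_3)^{-1/2}$. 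Substituting and using $\sum_{h\le H}h^{1/2}\ll H^{3/2}$ together with $\sum_{h\le H}h^{-1/2}\ll H^{1/2}$, this reduces to
\begin{equation*}
\Big|\sum_{a<n\le b}\e(f(n))\Big|^2\ll \frac{N^2}{H}+N^2 H^{1/2}\lambda_3^{1/2}+NH^{-1/2}\lambda_3^{-1/2}.
\end{equation*}
Balancing the first two terms via Lemma \ref{compare} selects $H\asymp\lambda_3^{-1/3}$ and, after taking a square root, produces $\ll N\lambda_3^{1/6}+N^{1/2}\lambda_3^{-1/6}$.

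The main technical point I anticipate is disposing of the surviving middle term $N^{1/2}\lambda_3^{-1/6}$: it is precisely the geometric mean of $N\lambda_3^{1/6}$ and $\lambda_3^{-1/3}$, hence dominated by their maximum, giving the announced estimate $a\lambda_3^{1/6}+\lambda_3^{-1/3}$. If the optimizing value of $H$ falls outside $[1,N]$ (which happens only for extreme ranges of $\lambda_3$), a second invocation of Lemma \ref{compare} supplies the correct endpoint interpolation and completes the argument.
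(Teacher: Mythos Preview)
The paper does not give a proof of this lemma at all: it merely cites Corollary~8.13 of Iwaniec--Kowalski~\cite{IwKo} for \eqref{2nd-deri-estimate} and Corollary~4.2 of Sargos~\cite{Sarg} for \eqref{3rd-deri-estimate}. Your outline for \eqref{2nd-deri-estimate} via truncated Poisson summation and stationary phase is the standard route and is fine.

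Your argument for \eqref{3rd-deri-estimate}, however, has a genuine gap. The claim that $N^{1/2}\lambda_3^{-1/6}$ is the geometric mean of $N\lambda_3^{1/6}$ and $\lambda_3^{-1/3}$ is arithmetically false: that geometric mean equals
\[
\big(N\lambda_3^{1/6}\cdot\lambda_3^{-1/3}\big)^{1/2}=N^{1/2}\lambda_3^{-1/12},
\]
not $N^{1/2}\lambda_3^{-1/6}$. This is not a repairable slip. The A-process followed by \eqref{2nd-deri-estimate} genuinely yields only
\[
\Big|\sum_{a<n\le b}\e(f(n))\Big|\ll N\lambda_3^{1/6}+N^{1/2}\lambda_3^{-1/6},
\]
and in the range $N^{-3}<\lambda_3<N^{-3/2}$ the second term strictly dominates the target bound; for instance at $\lambda_3=N^{-2}$ one has $N^{1/2}\lambda_3^{-1/6}=N^{5/6}$ while $N\lambda_3^{1/6}+\lambda_3^{-1/3}\asymp N^{2/3}$. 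The endpoint adjustment via Lemma~\ref{compare} does not help here, since the optimizing $H=\lambda_3^{-1/3}=N^{2/3}$ lies well inside $[1,N]$.

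The sharp form $a\lambda_3^{1/6}+\lambda_3^{-1/3}$ that the paper quotes from Sargos is \emph{not} obtainable by a single Weyl--van der Corput differencing; Sargos's proof proceeds by a different, more direct method. An alternative elementary route is to exploit the monotonicity of $f''$ (forced by $|f'''|\asymp\lambda_3$) to partition $[a,b]$ dyadically according to the size of $|f''|$ and apply \eqref{2nd-deri-estimate} on each piece, but that is a different argument from the one you sketched.
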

\begin{proof}
	For (\ref{2nd-deri-estimate}), one can see Corollary 8.13 of Iwaniec and Kowalski \cite{IwKo}, or Theorem 5
	of Chapter 1 in Karatsuba \cite{Kara}. For (\ref{3rd-deri-estimate}), one can see Corollary 4.2 of
	Sargos \cite{Sarg}.
\end{proof}

\begin{lemma}\label{Type-I-sum}
Suppose that $|a_k|\ll 1,b_\ell=1$ or $\log\ell,KL\asymp x$. Then if $K\ll x^{1/2}$, there holds
\begin{equation*}
S_{I}(K,L)\ll|j|^{1/6}|d|^{-1/6}x^{\gamma/6+3/4+\eps}+|j|^{-1/3}|d|^{1/3}x^{1-\gamma/3+\eps}.
\end{equation*}
\end{lemma}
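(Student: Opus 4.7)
\medskip

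\noindent\textbf{Proof proposal.} The plan is to treat $S_I(K,L)$ as an exponential sum in the long variable $\ell$, with the short variable $k$ handled by trivial summation on the outside. First I would apply the triangle inequality in $k$, using $|a_k|\ll 1$, to reduce matters to bounding, uniformly in $k\sim K$,
$$
T(k)\defeq\sum_{\ell\sim L}b_\ell\,\e\bigl(jd^{-1}k^\gamma\ell^\gamma+m_1k\ell\bigr).
$$
Since $b_\ell$ is either $1$ or $\log\ell$, partial summation reduces the $\log\ell$ case to the constant case at a cost of $O(\log L)$; the factor $x^\eps$ in the target bound absorbs this. So it suffices to estimate the pure exponential sum $\sum_{\ell\sim L}\e(f(\ell))$ with $f(\ell)=jd^{-1}k^\gamma\ell^\gamma+m_1k\ell$.

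Next I would compute the higher derivatives of $f$. The linear term $m_1k\ell$ is killed by two differentiations, and since $\gamma=1/c\in(0,1)$, the coefficient $\gamma(\gamma-1)(\gamma-2)$ is nonzero and of definite sign, so
$$
\bigl|f^{(3)}(\ell)\bigr|\asymp \lambda_3\defeq |j|\,|d|^{-1}k^\gamma L^{\gamma-3}\qquad(\ell\sim L),
$$
with $\lambda_3$ independent of $m_1$. This is the key point: the shift by $m_1k\ell$ is invisible from the third derivative onward, so the third-derivative bound \eqref{3rd-deri-estimate} in Lemma~\ref{derivative-estimate} applies directly and gives
$$
T(k)\ll x^\eps\bigl(L\lambda_3^{1/6}+\lambda_3^{-1/3}\bigr).
$$

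Finally I would sum over $k\sim K$. Substituting $\lambda_3\asymp|j||d|^{-1}k^\gamma L^{\gamma-3}$ and using $KL\asymp x$ together with the hypothesis $K\ll x^{1/2}$, the first piece contributes
$$
\sum_{k\sim K}L\lambda_3^{1/6}\ll |j|^{1/6}|d|^{-1/6}K^{1+\gamma/6}L^{(\gamma+3)/6}
= |j|^{1/6}|d|^{-1/6}(KL)^{\gamma/6}(KL)^{1/2}K^{1/2}
\ll |j|^{1/6}|d|^{-1/6}x^{\gamma/6+3/4},
$$
while the second piece contributes
$$
\sum_{k\sim K}\lambda_3^{-1/3}\ll |j|^{-1/3}|d|^{1/3}K^{1-\gamma/3}L^{1-\gamma/3}=|j|^{-1/3}|d|^{1/3}x^{1-\gamma/3}.
$$
Adding these two estimates (and absorbing $\log L$ in $x^\eps$) yields the claimed bound.

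The mildly delicate step is the third-derivative computation: one has to observe that the linear perturbation $m_1k\ell$ leaves $f'''$ untouched, so the uniform bound $|f'''(\ell)|\asymp \lambda_3$ holds regardless of $m_1$; this is what lets a Type~I estimate with an arbitrary linear twist fall out of Lemma~\ref{derivative-estimate}. The remaining arithmetic on exponents is routine, with the hypothesis $K\ll x^{1/2}$ used exactly once, to convert $KL^{1/2}=(KL)^{1/2}K^{1/2}$ into $x^{3/4}$.
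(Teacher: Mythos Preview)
Your proof is correct and follows essentially the same approach as the paper: take absolute values over $k$, remove the $\log\ell$ weight by partial summation, observe that the linear twist $m_1k\ell$ vanishes upon taking the third derivative so that $|f'''(\ell)|\asymp |j||d|^{-1}K^\gamma L^{\gamma-3}$, apply the third-derivative estimate \eqref{3rd-deri-estimate}, and then sum over $k\sim K$ using $KL\asymp x$ and $K\ll x^{1/2}$. The algebra and the use of the hypothesis match the paper's argument exactly.
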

\begin{proof}
Set $f(\ell)=jd^{-1}(k\ell)^\gamma+m_1 k\ell$. It is easy to see that
\begin{equation*}
f'''(\ell)=\gamma(\gamma-1)(\gamma-2)jd^{-1}k^\gamma\ell^{\gamma-3}\asymp |j||d|^{-1} K^{\gamma}L^{\gamma-3}.
\end{equation*}
If $K\ll x^{1/2}$, then by (\ref{3rd-deri-estimate}) of Lemma \ref{derivative-estimate}, we deduce that
\begin{align*}
x^{-\eps} \cdot S_{I}(K,L)
\ll & \,\, \sum_{k\sim K} \Bigg|\sum_{\ell\sim L}\mathbf{e}\big(f(\ell)\big)\Bigg|
              \nonumber \\
\ll & \,\, \sum_{k\sim K} \Big( L \big( |j||d|^{-1}K^{\gamma}L^{\gamma-3} \big)^{1/6}
           + \big( |j||d|^{-1}K^{\gamma}L^{\gamma-3} \big)^{-1/3} \Big)
              \nonumber \\
\ll & \,\, |j|^{1/6}|d|^{-1/6}x^{\gamma/6+1/2}K^{1/2} + |j|^{-1/3}|d|^{1/3}x^{1-\gamma/3}
               \nonumber \\
\ll & \,\, |j|^{1/6}|d|^{-1/6}x^{\gamma/6+3/4}+|j|^{-1/3}|d|^{1/3}x^{1-\gamma/3},
\end{align*}
which completes the proof of Lemma \ref{Type-I-sum}.
\end{proof}

\begin{lemma}\label{Type-II-sum}
Suppose that $|a_k|\ll 1,|b_\ell|\ll1$ with $k\sim K,\ell\sim L$ and $KL\asymp x$. Then if $x^{1/2}\ll K\ll x^{39/50}$, there holds
\begin{align*}
S_{II}(K,L) \ll & \,\,  |j|^{1/4}|d|^{-1/4}x^{\gamma/4+5/8} + |j|^{-1/4}|d|^{1/4}x^{1-\gamma/4}
                        \nonumber \\
                & \,\, + x^{89/100} +|j|^{1/6}|d|^{-1/6}x^{\gamma/6+3/4}.
\end{align*}
\end{lemma}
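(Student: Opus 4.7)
The plan is to apply the Cauchy--Schwarz inequality to remove the coefficient $a_k$, then use the Weyl--van der Corput (shift-and-average) technique on the $\ell$-sum with an auxiliary parameter $H$, reducing the problem to one-variable exponential sums that can be bounded via the derivative tests of Lemma \ref{derivative-estimate}. I expect the four terms in the stated bound to emerge after optimization over $H$ using Lemma \ref{compare}.

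Concretely, write $\phi_k(\ell)=jd^{-1}(k\ell)^\gamma+m_1k\ell$. Cauchy--Schwarz together with $|a_k|\ll 1$ gives
\begin{equation*}
|S_{II}(K,L)|^2\ll K\sum_{k\sim K}\left|\sum_{\ell\sim L}b_\ell\,\mathbf{e}(\phi_k(\ell))\right|^2.
\end{equation*}
The diagonal contribution from the expansion of the square is $\ll K^2L=Kx\ll x^{1+39/50}=x^{89/50}$, which after taking square roots is precisely the term $x^{89/100}$ of the stated bound. For the off-diagonal part, I would apply Weyl--van der Corput to the inner $\ell$-sum with a parameter $H\in[1,L]$, arriving at
\begin{equation*}
|S_{II}|^2\ll\frac{x^2}{H}+\frac{x}{H}\sum_{1\le h\le H}\sum_{k\sim K}\left|\sum_{\ell\sim L}\mathbf{e}(\psi_{k,h}(\ell))\right|,
\end{equation*}
where $\psi_{k,h}(\ell)=jd^{-1}k^\gamma\bigl[(\ell+h)^\gamma-\ell^\gamma\bigr]+m_1kh$.

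The derivatives of $\psi_{k,h}$ with respect to $\ell$ satisfy $|\partial_\ell^2\psi_{k,h}|\asymp|j||d|^{-1}K^\gamma hL^{\gamma-3}$ and $|\partial_\ell^3\psi_{k,h}|\asymp|j||d|^{-1}K^\gamma hL^{\gamma-4}$, so invoking \eqref{2nd-deri-estimate} and \eqref{3rd-deri-estimate} and summing trivially over $k\sim K$ and $1\le h\le H$ will produce a bound of the shape
\begin{equation*}
|S_{II}|^2\ll\frac{x^2}{H}+\sum_i A_iH^{a_i}+\sum_j B_jH^{-b_j}
\end{equation*}
for explicit monomials $A_i,B_j$ in $|j|,|d|,x$ (with $a_i,b_j\in\{1/6,1/2,1/3,1/2\}$ depending on which derivative test is invoked). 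Optimizing $H$ within $[1,L]$ via Lemma \ref{compare}, the crossover terms $(A_i^{b_j}B_j^{a_i})^{1/(a_i+b_j)}$ should reproduce $|j|^{1/4}|d|^{-1/4}x^{\gamma/4+5/8}$ and $|j|^{-1/4}|d|^{1/4}x^{1-\gamma/4}$ from balances involving the second derivative estimate, and $|j|^{1/6}|d|^{-1/6}x^{\gamma/6+3/4}$ from balancing the $x^2/H$ term against the third derivative contribution. The hardest part will be the exponent bookkeeping: I must verify that the optimization produces exactly these four exponents, and that the constraint $x^{1/2}\ll K\ll x^{39/50}$ keeps the diagonal term from dominating.
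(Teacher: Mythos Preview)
Your overall architecture (Cauchy--Schwarz, Weyl differencing, derivative test, optimize the shift parameter via Lemma~\ref{compare}) matches the paper, but there is a genuine gap in the step where you drop the coefficients. After Cauchy in $k$ and a Weyl shift in $\ell$, the differenced inner sum is
\[
\sum_{\ell}b_{\ell+h}\overline{b_\ell}\,\mathbf{e}\big(\psi_{k,h}(\ell)\big),
\]
not $\sum_\ell \mathbf{e}(\psi_{k,h}(\ell))$. Since $|b_\ell|\ll 1$ is the only hypothesis, the weights $b_{\ell+h}\overline{b_\ell}$ cannot simply be discarded, and Lemma~\ref{derivative-estimate} does not apply to a weighted sum. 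Your displayed bound for $|S_{II}|^2$ is therefore not justified, and the subsequent derivative computations in $\ell$ (and the invocation of the third-derivative test) rest on an invalid reduction.

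The paper resolves this by interchanging the $k$- and $\ell$-sums \emph{after} the Weyl step: because the Fej\'er-weighted expression $\sum_k U_k$ is real and nonnegative, one may write
\[
|S_{II}|^2\ll\frac{K^2L^2}{Q}+\frac{KL}{Q}\sum_{\ell\sim L}\sum_{0<|q|\le Q}\bigl|\mathfrak{S}(q;\ell)\bigr|,\qquad
\mathfrak{S}(q;\ell)=\sum_{k\in\mathcal{I}(q;\ell)}\mathbf{e}\big(g(k)\big),
\]
so that the \emph{inner} sum over $k$ is now coefficient-free. The paper then applies only the \emph{second}-derivative estimate~\eqref{2nd-deri-estimate} in the variable $k$ (no third-derivative test is needed), and a single optimization of $Q\in[1,L]$ via Lemma~\ref{compare} produces five terms; after taking square roots and inserting $x^{1/2}\ll K\ll x^{39/50}$, these become exactly the four terms of the statement (with $K^{1/2}x^{1/2}\ll x^{89/100}$ and $K^{-1/4}x\ll x^{7/8}$ absorbed). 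Your third-derivative route and the speculative attribution of which balance yields which term are unnecessary once the sum is placed over $k$.
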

\begin{proof}
Let $Q$, which satisfies $1 < Q < L$, be a parameter which will be chosen later. By the Weyl--van der Corput inequality
(see Lemma 2.5 of Graham and Kolesnik \cite{GraKol}), we have
\begin{align}\label{Weyl-inequality}
&\,\, \Bigg| \mathop{\sum_{k\sim K} \sum_{\ell\sim L}}_{KL\asymp x} a_k b_\ell
      \mathbf{e} \big(jd^{-1}(k\ell)^\gamma + m_1 k\ell \big)\Bigg|^2 \nonumber \\
\ll &\,\, K^2 L^2 Q^{-1} + KLQ^{-1} \sum_{\ell\sim L} \sum_{0<|q|\leqslant Q} \big|\mathfrak{S}(q;\ell)\big|,
\end{align}
where
\begin{equation*}
\mathfrak{S}(q;\ell)=\sum_{k\in\mathcal{I}(q;\ell)}\mathbf{e}\big(g(k)\big)
\end{equation*}
with
\begin{equation*}
g(k)=jd^{-1}k^\gamma\big(\ell^\gamma-(\ell+q)^\gamma\big)- m_1 kq.
\end{equation*}
It is easy to see that
\begin{equation*}
g''(k)=\gamma(\gamma-1)jd^{-1}k^{\gamma-2} \big(\ell^\gamma-(\ell+q)^\gamma\big)
 \asymp |j||d|^{-1} K^{\gamma-2} L^{\gamma-1} |q|.
\end{equation*}
By (\ref{2nd-deri-estimate}) of Lemma \ref{derivative-estimate}, we have
\begin{equation}\label{Type-II-inner}
\mathfrak{S}(q;\ell)\ll K\big(|j||d|^{-1} K^{\gamma-2} L^{\gamma-1} |q| \big)^{1/2}
+ \big(|j||d|^{-1} K^{\gamma-2}L^{\gamma-1}|q|\big)^{-1/2}.
\end{equation}
Putting (\ref{Type-II-inner}) into (\ref{Weyl-inequality}), we derive that
\begin{align*}
& \,\, \Bigg|\mathop{\sum_{k\sim K} \sum_{\ell\sim L}}_{KL\asymp x} a_k b_\ell
       \mathbf{e} \big( h (k\ell)^\gamma + m_1 k \ell + m_2 \big)\Bigg|^2
          \nonumber \\
\ll & \,\, K^2L^2Q^{-1}+KLQ^{-1}\sum_{\ell\sim L}\sum_{0<|q|\leqslant Q}
\big(|j|^{1/2}|d|^{-1/2}K^{\gamma/2}L^{\gamma/2-1/2}|q|^{1/2}
          \nonumber \\
    & \,\,+|j|^{-1/2}|d|^{1/2}K^{1-\gamma/2}L^{1/2-\gamma/2}|q|^{-1/2}\big)
          \nonumber \\
\ll & \,\,K^2L^2Q^{-1}+KLQ^{-1}\big(|j|^{1/2}|d|^{-1/2}K^{\gamma/2}L^{\gamma/2+1/2}Q^{3/2} \\
&\,\, + |j|^{-1/2}|d|^{1/2}K^{1-\gamma/2}L^{3/2-\gamma/2}Q^{1/2}\big)
          \nonumber \\
\ll & \,\, K^2L^2Q^{-1}+|j|^{1/2}|d|^{-1/2}K^{1+\gamma/2}L^{\gamma/2+3/2}Q^{1/2}
          \nonumber \\
    & \,\, +|j|^{-1/2}|d|^{1/2}K^{2-\gamma/2}L^{5/2-\gamma/2}Q^{-1/2}.
\end{align*}
By noting that $1\leqslant Q\leqslant L$, it follows from Lemma \ref{compare} that there exists an optimal $Q$ such that
\begin{align*}
    & \,\, \Bigg|\mathop{\sum_{k\sim K}\sum_{\ell\sim L}}_{KL\asymp x}a_kb_\ell
           \mathbf{e}\big(jd^{-1}(k\ell)^\gamma+m_1k\ell\big)\Bigg|^2
               \nonumber \\
\ll & \,\, |j|^{1/2}|d|^{-1/2}x^{\gamma/2+3/2}K^{-1/2}+Kx
            +|j|^{-1/2}|d|^{1/2}x^{2-\gamma/2}
               \nonumber \\
    & \,\,  +|j|^{1/3}|d|^{-1/3}x^{\gamma/3+5/3}K^{-1/3}+K^{-1/2}x^2,
\end{align*}
which implies
\begin{align*}
&\,\, \Bigg|\mathop{\sum_{k\sim K}\sum_{\ell\sim L}}_{KL\asymp x}a_kb_\ell
      \mathbf{e}\big(jd^{-1}(k\ell)^\gamma+m_1k\ell\big)\Bigg| \\
\ll & \,\, |j|^{1/4}|d|^{-1/4}x^{\gamma/4+3/4}K^{-1/4}+|j|^{-1/4}|d|^{1/4}x^{1-\gamma/4} + K^{1/2}x^{1/2} \\
    & \,\, +|j|^{1/6}|d|^{-1/6}x^{\gamma/6+5/6}K^{-1/6} + K^{-1/4}x \\
\ll & \,\, |j|^{1/4}|d|^{-1/4}x^{\gamma/4+5/8}+|j|^{-1/4}|d|^{1/4}x^{1-\gamma/4}+x^{89/100}
           +|j|^{1/6}|d|^{-1/6}x^{\gamma/6+3/4},
\end{align*}
by the condition $x^{1/2}\ll K\ll x^{39/50}$, which completes the proof of Lemma \ref{Type-II-sum}.
\end{proof}

\begin{lemma}\label{lem:exp}
Suppose that $x/2<t\leqslant x$. Then for any real number $m_1$, we have
\begin{align*}
    &\,\, \max_{x/2<t\leqslant x}\Bigg|\sum_{x/2<n\leqslant t}\Lambda(n)\mathbf{e}\big(jd^{-1}n^\gamma+m_1n\big) \Bigg|
                \nonumber \\
\ll &\,\, x^{\varepsilon} \Big( |j|^{1/6}|d|^{-1/6}x^{\gamma/6+3/4}+|j|^{-1/3}|d|^{1/3}x^{1-\gamma/3}
          +|j|^{1/4}|d|^{-1/4}x^{\gamma/4+5/8}
                 \nonumber \\
    &\,\, \qquad + |j|^{-1/4}|d|^{1/4}x^{1-\gamma/4}+x^{89/100}\Big).
\end{align*}	
\end{lemma}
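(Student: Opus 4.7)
The strategy is to apply Heath--Brown's decomposition of $\Lambda$ and reduce the estimate to bilinear exponential sums of Type~I and Type~II, to which Lemmas~\ref{Type-I-sum} and~\ref{Type-II-sum} apply directly.

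First, I would remove the maximum over $t\in(x/2,x]$ by a standard completing argument (a Perron--type truncation at height $x$, or partial summation on the dyadic block $n\sim x$), incurring only a harmless $\log x$ factor that may be absorbed into $x^\varepsilon$. Then I would invoke Lemma~\ref{Heath-Brown-identity} with $k=3$ and $z=x^{1/3}$, which writes $\Lambda(n)$ for $n\leqslant 2x$ as a bounded linear combination of $2r$--fold convolution weights (with $r\in\{1,2,3\}$) in which the last $r$ arguments are constrained to lie below $z$. A dyadic decomposition on each variable then reduces everything to $O(\log^6 x)$ bilinear sums
$$
S(K,L)=\mathop{\sum_{k\sim K}\sum_{\ell\sim L}}_{KL\asymp x} a_k\,b_\ell\,\e\big(jd^{-1}(k\ell)^\gamma+m_1k\ell\big),
$$
with $|a_k|,|b_\ell|\ll x^\varepsilon$, where $k$ and $\ell$ are products of disjoint subsets of the original Heath--Brown variables.

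The central combinatorial step is to show that the variables can always be regrouped so that $K$ falls into the admissible range for Lemma~\ref{Type-I-sum} (namely $K\ll x^{1/2}$, with $b_\ell\in\{1,\log\ell\}$) or for Lemma~\ref{Type-II-sum} (namely $x^{1/2}\ll K\ll x^{39/50}$). Since every M\"obius--weighted variable has dyadic size at most $z=x^{1/3}<x^{1/2}$, any block of size exceeding $x^{1/2}$ must come from one of the non--M\"obius variables $n_1,\dots,n_r$, whose coefficient is $\log n_1$ or $1$; assigning such a block to the $\ell$--slot puts us in the Type~I configuration with $K=x/L\leqslant x^{1/2}$. If instead every dyadic size lies below $x^{1/2}$, a case--by--case inspection for $r\in\{1,2,3\}$ shows that some subset product $K$ always lies in $[x^{1/2},x^{39/50}]$; this uses that the M\"obius bound $z=x^{1/3}$ and the Type~II ceiling $x^{39/50}$ are mutually compatible, so no size profile can force $K$ into the forbidden interval $(x^{39/50},x)$.

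Applying Lemmas~\ref{Type-I-sum} and~\ref{Type-II-sum} to the resulting pieces and summing over the $O(\log^6 x)$ dyadic boxes then yields the claimed bound: the Type~I contributions supply the $|j|^{1/6}|d|^{-1/6}x^{\gamma/6+3/4+\varepsilon}$ and $|j|^{-1/3}|d|^{1/3}x^{1-\gamma/3+\varepsilon}$ terms, while the Type~II contributions supply the remaining three, including the $x^{89/100+\varepsilon}$ term. The main obstacle is precisely the combinatorial rebalancing step; the choice $z=x^{1/3}$ is calibrated so that the Type~II window $[x^{1/2},x^{39/50}]$ is wide enough to accommodate every size profile produced by the Heath--Brown identity, and this balance is what ultimately pins down the exponent $89/100$ in the conclusion.
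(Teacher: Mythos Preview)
Your proposal is correct and follows essentially the same route as the paper: Heath--Brown's identity with $k=3$, dyadic decomposition into $O(\log^6 x)$ bilinear sums, and then Lemmas~\ref{Type-I-sum} and~\ref{Type-II-sum}. The only difference is that the paper makes your ``case--by--case inspection'' explicit via the single threshold $x^{11/50}=x^{1-39/50}$ (either some $N_j\geqslant x^{1/2}$ gives Type~I; or some $N_j\in[x^{11/50},x^{1/2})$ gives $L=N_j$ for Type~II; or all $N_j<x^{11/50}$ and a greedy partial product of the ordered $N_j$'s lands in $[x^{11/50},x^{11/25})\subset[x^{11/50},x^{1/2})$), and it does not separately strip off the $\max_t$ but simply bounds the sum uniformly in $t\in(x/2,x]$.
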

\begin{proof}
By Heath--Brown's identity, i.e. Lemma \ref{Heath-Brown-identity}, with $k=3$, one can see that the exponential sum
\begin{equation*}
\max_{x/2<t\leqslant x}\Bigg|\sum_{x/2<n\leqslant t}\Lambda(n)\mathbf{e}\big(jd^{-1}n^\gamma+m_1n\big)\Bigg|
\end{equation*}
can be written as linear combination of $O(\log^6x)$ sums, each of which is of the form
\begin{align}\label{single-sum}
       \mathcal{T}^* :=
& \,\, \sum_{n_1\sim N_1}\cdots\sum_{n_6\sim N_6}(\log n_1)\mu(n_4)\mu(n_5)\mu(n_6) \nonumber \\
& \,\, \qquad\qquad\qquad \times \mathbf{e}\big(jd^{-1}(n_1n_2\cdots n_6)^\gamma+m_1(n_1n_2\cdots n_6)\big),
\end{align}
where $N_1N_2\cdots N_6\asymp x$; $2N_i\leqslant(2x)^{1/3},i=4,5,6$ and some $n_i$ may only take value $1$.
Therefore, it is sufficient for us to give upper bound estimate for each $\mathcal{T}^*$ defined as in (\ref{single-sum}). Next, we will consider three cases.

\noindent
\textbf{Case 1.} If there exists an $N_j$ such that $N_j\geqslant x^{1/2}$, then we must have $j\leqslant3$ for the fact that
$N_i\ll x^{1/3}$ with $i=4,5,6$. Let
\begin{equation*}
	k=\prod_{\substack{1\leqslant i\leqslant6\\ i\not=j}}n_i,\qquad \ell=n_j,\qquad
	K=\prod_{\substack{1\leqslant i\leqslant6\\ i\not=j}}N_i,\qquad L=N_j.
\end{equation*}
In this case, we can see that $\mathcal{T}^*$ is a sum of ``Type I'' satisfying $K\ll x^{1/2}$. By Lemma \ref{Type-I-sum},
we have
\begin{equation*}
x^{-\varepsilon}\cdot \mathcal{T}^*\ll |j|^{1/6}|d|^{-1/6}x^{\gamma/6+3/4}+|j|^{-1/3}|d|^{1/3}x^{1-\gamma/3}.
\end{equation*}

\noindent
\textbf{Case 2.} If there exists an $N_j$ such that $x^{11/50}\leqslant N_j<x^{1/2}$, then we take
\begin{equation*}
	k=\prod_{\substack{1\leqslant i\leqslant6\\ i\not=j}}n_i,\qquad \ell=n_j,\qquad
	K=\prod_{\substack{1\leqslant i\leqslant6\\ i\not=j}}N_i,\qquad L=N_j.
\end{equation*}
Thus, $\mathcal{T}^*$ is a sum of ``Type II'' satisfying $x^{1/2}\ll K\ll x^{39/50}$. By Lemma \ref{Type-II-sum},
we have
\begin{align*}
	         x^{-\varepsilon}\cdot  \mathcal{T}^*
 \ll & \,\, |j|^{1/4}|d|^{-1/4}x^{\gamma/4+5/8} + |j|^{-1/4}|d|^{1/4}x^{1-\gamma/4}
                        \nonumber \\
     & \,\, + x^{89/100} +|j|^{1/6}|d|^{-1/6}x^{\gamma/6+3/4}.
\end{align*}

\noindent
\textbf{Case 3.} If $N_j<x^{11/50}\,(j=1,2,3,4,5,6)$, without loss of generality, we assume that
$N_1\geqslant N_2\geqslant\cdots\geqslant N_6$. Let $r$ denote the natural number $j$ such that
\begin{equation*}
	N_1N_2\cdots N_{j-1}<x^{11/50},\qquad N_1N_2\cdots N_j\geqslant x^{11/50}.
\end{equation*}
Since $N_1<x^{11/50}$ and $N_6<x^{11/50}$, then $2\leqslant r\leqslant5$. Thus, we have
\begin{equation*}
	x^{11/50}\leqslant N_1N_2\cdots N_r=(N_1\cdots N_{r-1})\cdot N_r<x^{11/50}\cdot x^{11/50}<x^{1/2}.
\end{equation*}
Let
\begin{equation*}
	k=\prod_{i=r+1}^6n_i,\qquad \ell=\prod_{i=1}^rn_i,\qquad K=\prod_{i=r+1}^6N_i,\qquad L=\prod_{i=1}^rN_i.
\end{equation*}
At this time, $\mathcal{T}^*$ is a sum of ``Type II'' satisfying $x^{1/2}\ll K\ll x^{39/50}$. By Lemma \ref{Type-II-sum},
we have
\begin{align*}
	         x^{-\varepsilon}\cdot  \mathcal{T}^*
 \ll & \,\, |j|^{1/4}|d|^{-1/4}x^{\gamma/4+5/8} + |j|^{-1/4}|d|^{1/4}x^{1-\gamma/4}
                        \nonumber \\
     & \,\, + x^{89/100} +|j|^{1/6}|d|^{-1/6}x^{\gamma/6+3/4}.
\end{align*}
Combining the above three cases, we derive that
\begin{align*}
	x^{-\varepsilon}\cdot \mathcal{T}^*
	\ll & \,\, |j|^{1/6}|d|^{-1/6}x^{\gamma/6+3/4}+|j|^{-1/3}|d|^{1/3}x^{1-\gamma/3}\\
	&\,\, +|j|^{1/4}|d|^{-1/4}x^{\gamma/4+5/8} + |j|^{-1/4}|d|^{1/4}x^{1-\gamma/4} + x^{89/100},
\end{align*}
which completes the proof of this lemma.
\end{proof}

\section{Proof of Theorem~\ref{thm:main}}

\subsection{Initial construction and the main term}

Recall that $\gamma \defeq c^{-1}$ and $a \defeq \alpha^{-1}$. The set, which we sieve, is
$$
\cA \defeq \big\{n \le x^\gamma: p = \fl{n^c}, p = \fl{\alpha m + \beta}\big \},
$$
where $p$ is prime and $n,m$ are integers. For any $d \le D$, where $D$ is a fixed power of $x$, we estimate
$$
\cA_d \defeq \{m \in \cA: d\,|\, m \}.
$$
We know that $rd \in \cA$ if and only if
$$
p \le (rd)^c < p+1 \qquad \textrm{and}\qquad rd \leqslant x^\gamma
$$
and
$$
p = \fl{\alpha m + \beta}.
$$
The cardinality of $\cA_d$ is equal to the number of primes $p \le x$ for which the interval
$\big[p^\gamma d^{-1},(p+1)^\gamma d^{-1}\big)$ contains a natural number, where $p$ is in a Beatty sequence. Hence
\begin{align*}
|\cA_d| & = \Big|\Big\{ p \le x: -(p+1)^\gamma d^{-1} < -r \le -p^\gamma d^{-1}~\text{for some } r \in \NN,
             p = \fl{\alpha m + \beta}\Big\}\Big| \\
        & = \sum_{p \le x} \( \fl{-p^\gamma d^{-1}} - \fl{-(p+1)^\gamma d^{-1}} \)\chi_{\alpha, \beta}(p).
\end{align*}
By noting that
\begin{align*}
   & \,\, \fl{-p^\gamma d^{-1}} - \fl{-(p+1)^\gamma d^{-1}}
              \nonumber \\
 = & \,\, \gamma p^{\gamma - 1} d^{-1} + O(p^{\gamma-2}d^{-1}) + \psi\(-(p+1)^\gamma d^{-1} \) - \psi(-p^\gamma d^{-1}),
\end{align*}
and by Lemma~\ref{lem:Beatty}
$$
\chi_{\alpha, \beta} (p) = a + \psi(-a(p+1-\beta)) - \psi(-a(p-\beta)),
$$
then we obtain that
$$
|\cA_d| = \cX d^{-1} + \cS_1 + \cS_2 + \cS_3,
$$
where
\begin{align*}
  \cX &\defeq  \sum_{p \le x}a\(\gamma p^{\gamma-1}+O(p^{\gamma-2})\)=\frac{x^\gamma}{\alpha\log x}
     +O\(\frac{x^\gamma}{\log^2 x} \),
           \nonumber \\
\cS_1 &\defeq \sum_{p \le x} \big(  \gamma p^{\gamma-1} d^{-1} + O(p^{\gamma-2} d^{-1}) \big) \\
&\qquad\quad \times \big( \psi(-a(p+1-\beta))-\psi(-a(p-\beta)) \big), \\
\cS_2 &\defeq \sum_{p \le x} \big( \psi(-(p+1)^\gamma d^{-1})-\psi(-p^\gamma d^{-1}) \big) \\
&\qquad\quad \times \big( \psi(-a(p+1-\beta))-\psi(-a(p-\beta)) \big),	\\
\cS_3 &\defeq \sum_{p \le x} a\big( \psi\(-(p+1)^\gamma d^{-1} \) - \psi(-p^\gamma d^{-1}) \big). \\
\end{align*}
By Lemma~\ref{lem:Greaves}, we only need to show that, for any sufficiently small $\eps > 0$, there holds
\begin{equation}
\label{eq:main}
\sum_{d \le D} \big| |\cA_d| - \cX d^{-1} \big| \ll \cX x^{-\eps/3} \ll x^{\gamma-\eps/3}
\end{equation}
for sufficiently large $x$, which is sufficient to show that
\begin{equation}\label{dis-123}
\sum_{d\le D}\Big(\big|\cS_1\big|+\big|\cS_2\big|+\big|\cS_3\big|\Big)\ll x^{\gamma-\eps/3}.
\end{equation}
Our aim is to establish (\ref{dis-123}) with $D$ as large as possible, and in the following subsection we shall show that (\ref{dis-123}) holds when
\begin{equation*}
  D\ll x^{\gamma-11/12-\varepsilon}
\end{equation*}
for $\varepsilon>0$ sufficiently small. Suppose this has been done, and observe that for $R\geqslant13$
\begin{equation*}
   \gamma-\frac{11}{12}>\frac{8}{8R-1} \qquad \textrm{if and only if}\qquad \gamma>\frac{88R+85}{96R-12}.
\end{equation*}
Then, for any fixed $\gamma\in(\frac{88R+85}{96R-12},1)$ and $\vartheta\in(\frac{8}{8R-1},\gamma-\frac{11}{12})$, the left--hand
side of (\ref{dis-123}) with $D:=x^\vartheta$ implies the bound
\begin{equation*}
   \sum_{d \le D} \big| |\cA_d| - \cX d^{-1} \big|  \ll \frac{\cX}{\log^2\cX},
\end{equation*}
and thus we can apply the weighted sieve in the form of Lemma \ref{lem:Greaves} with the choices of the parameters in \cite[pp.~174--175]{Greaves} as follows
$$
R \geqslant 13,\qquad \delta_R \defeq 0.124820, \qquad g\defeq \frac{8R-1}{8}.
$$
For sufficiently large $x$, by noting that $g<R-\delta_R$ and the inequality $a<D^g$ holds for all
$a\in\cA$ since $\vartheta g>1$. Thus, the conditions of Lemma \ref{lem:Greaves} are met, and
we conclude that $\cA$ contains at least $\gg\cX/\log\cX $ numbers with at most $R$ prime factors. This yields the statement of the theorem for all $c$ in the interval $(1,\frac{96R-12}{88R+85})$.

Next, we estimate the contribution of the upper bound generated by $\cS_1,\cS_2$ and $\cS_3$ separately.

\subsection{Estimation of $\cS_1$}
We write $\cS_1 = \cS_{11} + O(\cS_{12})$, where
$$
\cS_{11} \defeq \sum_{p \le x} \gamma p^{\gamma-1} d^{-1}  \big( \psi(-a(p+1-\beta))-\psi(-a(p-\beta)) \big)
$$
and
$$
\cS_{12} \defeq \sum_{p \le x} p^{\gamma-2} d^{-1}.
$$
We choose that
$$
H \defeq x^\eps.
$$
By the Vaaler's approximation, i.e. Lemma~\ref{lem:Vaaler}, we have
\begin{align}\label{dis-012}
     & \,\, \psi(-a(p+1-\beta))-\psi(-a(p-\beta))
                \nonumber \\
  = & \,\, \sum_{0<|h|\leqslant H}a_h \big(\e (ah(p+1-\beta))-\e(ah(p-\beta))\big)
                \nonumber \\
    & \,\, +O\Bigg(\sum_{|h|\leqslant H}b_h\big(\e(ah(p+1-\beta))+\e(ah(p-\beta))\big)\Bigg),
\end{align}
and thus $\cS_{11} = \cS_{13} + O(\cS_{14})$, where
$$
\cS_{13}\defeq \sum_{p \le x}\sum_{0<|h|\le H}\gamma p^{\gamma-1}d^{-1}a_h\big( \e(ah(p+1-\beta))-\e(ah(p-\beta))\big)
$$
and
$$
\cS_{14}\defeq\sum_{p\leqslant x}\sum_{|h|\le H}p^{\gamma-1}d^{-1} b_h\big(\e(ah(p+1-\beta))+\e(ah(p-\beta))\big).
$$
The contribution of $\cS_{14}$ from $h=0$ is
$$
\ll \sum_{p \le x} p^{\gamma-1} d^{-1} H^{-1} \ll H^{-1} d^{-1} x^\gamma \log^{-1} x.
$$
Then we have
\begin{align*}
\sum_{d \le D} \big| H^{-1} d^{-1} x^\gamma \log^{-1} x \big| \ll x^{\gamma - \eps/3},
\end{align*}
for any $D \ll x^A$, where $A$ is an arbitrary positive real number. Now we consider $\cS_{13}$, since the upper bound estimate of $\cS_{14}$ from $h \neq 0$ is similar to that of $\cS_{13}$. By Lemma \ref{lem:index} and a splitting argument, it suffices to prove that
\begin{align}\label{eq:S13}
	& \frac{1}{\log x}\sum_{d \le D}d^{-1}\max_{x/2<t\leqslant x} \Bigg| \sum_{x/2<n\leqslant t}\Lambda(n)n^{\gamma-1}
      \sum_{0<|h|\leqslant H}a_{h}   \nonumber \\
	&\qquad\qquad\times\big(\e(ah(n+1-\beta))-\e(ah(n-\beta))\big)\Bigg|\ll x^{\gamma-\eps/3}.
\end{align}
Define
\begin{equation}\label{eq:theta}
  \theta_{h}\defeq\e(ah)-1.
\end{equation}
Then, by partial summation and the trivial estimate $\theta_{h} \ll 1$, the left--hand side of \eqref{eq:S13} is
\begin{align*}
  \ll & \,\, \max_{x/2<t\leqslant x}\Bigg|\sum_{x/2<n\leqslant t}\Lambda(n)n^{\gamma-1}\sum_{0<|h|\leqslant H}a_{h}
             \theta_{h}\e(ah(n-\beta))\Bigg|
                   \nonumber \\
  \ll & \,\, \max_{x/2<t\leqslant x}\Bigg|\int_{\frac{x}{2}}^tu^{\gamma-1}\mathrm{d}\Bigg(\sum_{x/2<n\leqslant u}
             \Lambda(n)\sum_{0<|h|\leqslant H}a_{h}\theta_{h}\e(ah(n-\beta))\Bigg)\Bigg|
                   \nonumber \\
  \ll & \,\, x^{\gamma-1}\times\max_{x/2<u\leqslant x}\Bigg|\sum_{x/2<n\leqslant u}\Lambda(n)\sum_{0<|h|\leqslant H}
             a_{h}\theta_{h}\e(ah(n-\beta))\Bigg|
                   \nonumber \\
  \ll & \,\, x^{\gamma-1}\times\max_{x/2<u\leqslant x}\sum_{0<|h|\leqslant H}\frac{1}{|h|}
             \Bigg|\sum_{x/2<n\leqslant u}\Lambda(n)\e(ahn)\Bigg|.
\end{align*}
Moreover, it follows from Lemma \ref{lem:ttr} that the left--hand side of \eqref{eq:S13} holds
\begin{align*}
\ll & \,\, x^{\gamma-1}\sum_{0<h\le H}h^{-1}(h^{1/2}x^{1-1/(2\tau)+\eps}+x^{1-\eps}) \\
\ll & \,\, x^{\gamma-1}(H^{1/2}x^{1-1/(2\tau)+\eps}+x^{1-\eps}\log H) \\
\ll & \,\, x^{\gamma-1/(2\tau)+3\eps/2}+x^{\gamma-\eps}\ll x^{\gamma-\eps/3},
\end{align*}
uniformly for any $D \ll x^A$, where $A$ is an arbitrary positive real number. For the contribution of $\cS_{12}$ to (\ref{dis-123}), we have
\begin{equation*}
 \ll \sum_{d\leqslant D}\frac{1}{d}\sum_{p\leqslant x}p^{\gamma-2}\ll x^{\gamma-1+\varepsilon}\ll x^{\gamma-\varepsilon/3}.
\end{equation*}

\subsection{Estimation of $\cS_3$}

The estimation of $S_3$ is the same as the proof of~\cite[p. 360, equation (5)]{BBGY}. Based on the same method, we have
$$
\sum_{d\le D}\big|\cS_3\big|\ll x^{\gamma-\eps/3},
$$
with
$$
D \ll x^{\gamma - \frac{380}{441}-\eps},
$$
from~\cite[p. 366, equation (23)]{BBGY}.

\subsection{Estimation of $\cS_2$}
 By the Vaaler's approximation, i.e. Lemma \ref{lem:Vaaler}, and taking $J \defeq x^{1-\gamma + \eps} d$,    we have that
\begin{align}\label{eq:gamma}
&\qquad \psi(-(p+1)^\gamma d^{-1})-\psi(-p^\gamma d^{-1}) \nonumber \\
&= \sum_{0 < |j| \le J} a_{j} \big( \e(j (p+1)^\gamma d^{-1}) - \e(j p^\gamma d^{-1}) \big) \nonumber \\
&\qquad + O\( \sum_{|j| \le J} b_{j} \big( \e(j (p+1)^\gamma d^{-1}) + \e(j p^\gamma d^{-1}) \big) \),
\end{align}
with the coefficients satisfying
$$
a_{j} \ll |j|^{-1} \mand b_{j} \ll J^{-1}.
$$
By (\ref{dis-012}) and (\ref{eq:gamma}), it is easy to see that
\begin{equation*}
\cS_2=\cS_{21}+O(\cS_{22}+\cS_{23}+\cS_{24}),
\end{equation*}
where
\begin{align*}
\cS_{21} &\defeq \sum_{p \le x} \sum_{0 < |j| \le J} a_{j} \big( \e(j (p+1)^\gamma d^{-1}) - \e(j p^\gamma d^{-1}) \big) \\
&\qquad \times \sum_{0 < |h| \le H} a_h \( \e(ah(p+1-\beta)) - \e(ah(p-\beta)) \), \\
\cS_{22} &\defeq \sum_{p \le x} \sum_{0 < |j| \le J} a_{j} \big( \e(j (p+1)^\gamma d^{-1}) - \e(j p^\gamma d^{-1}) \big) \\
&\qquad \times \sum_{|h| \le H} b_h \( \e(ah(p+1-\beta)) + \e(ah(p-\beta)) \), \\
\cS_{23} &\defeq \sum_{p \le x} \sum_{|j| \le J} b_{j} \big( \e(j (p+1)^\gamma d^{-1}) + \e(j p^\gamma d^{-1}) \big) \\
&\qquad \times \sum_{0 < |h| \le H} a_h \( \e(ah(p+1-\beta)) - \e(ah(p-\beta)) \), \\
\cS_{24} &\defeq \sum_{p \le x} \sum_{|j| \le J} b_{j} \big( \e(j (p+1)^\gamma d^{-1}) + \e(j p^\gamma d^{-1}) \big) \\
&\qquad \times \sum_{|h| \le H} b_h \( \e(ah(p+1-\beta)) + \e(ah(p-\beta)) \).
\end{align*}
\subsubsection{Estimation of $\cS_{21}$}
By Lemma~\ref{lem:index} and a splitting argument, it is sufficient to prove that
\begin{align}\label{eq:S21}
  & \,\, \sum_{d \le D}\max_{x/2<t\leqslant x}\Bigg|\sum_{x/2<n\leqslant t}\Lambda(n)\sum_{0<|j|\leqslant J}a_{j}
         \big(\e(j(n+1)^\gamma d^{-1})-\e(jn^\gamma d^{-1})\big)
               \nonumber\\
  & \qquad \,\, \times \sum_{0<|h|\leqslant H}a_h\big(\e(ah(n+1-\beta))-\e(ah(n-\beta))\big)\Bigg|\ll x^{\gamma-\eps/3},
\end{align}
Define
\begin{equation*}
  \phi_j(t):=\e\big(jd^{-1}((t+1)^\gamma-t^\gamma)\big)-1.
\end{equation*}
Then we have
\begin{equation*}
  \phi_j(t)\ll |j||d|^{-1}t^{\gamma-1}\qquad \textrm{and}\qquad
  \frac{\partial\phi_j(t)}{\partial t}\ll |j||d|^{-1}t^{\gamma-2}.
\end{equation*}
It follows from the above estimate, (\ref{eq:theta}), Lemma \ref{lem:exp} and partial summation that
the left--hand side of (\ref{eq:S21}) is
\begin{align*}
\ll & \,\, \sum_{d \le D}\max_{x/2<t\leqslant x}\Bigg|\sum_{0<|j|\leqslant J}a_j\sum_{x/2<n\leqslant t}\Lambda(n)
           \phi_j(n)\e(jd^{-1}n^\gamma)
                  \nonumber \\
    & \,\, \qquad \qquad\times\sum_{0<|h|\leqslant H}a_h\big(\e(ah(n+1-\beta))-\e(ah(n-\beta))\big)\Bigg|
                  \nonumber \\
\ll & \,\, \sum_{d \le D}\max_{x/2<t\leqslant x}\sum_{0<|j|\leqslant J}\frac{1}{|j|}\Bigg|\sum_{x/2<n\leqslant t}
           \Lambda(n)\phi_j(n)\e(jd^{-1}n^\gamma)
                  \nonumber \\
    & \,\, \qquad \qquad\times\sum_{0<|h|\leqslant H}a_h\big(\e(ah(n+1-\beta))-\e(ah(n-\beta))\big)\Bigg|
                  \nonumber \\
\ll & \,\, \sum_{d \le D}\max_{x/2<t\leqslant x}\sum_{0<|j|\leqslant J}\frac{1}{|j|}\Bigg|\int_{\frac{x}{2}}^t\phi_j(\nu)
           \mathrm{d}\Bigg(\sum_{x/2<n\leqslant\nu}\Lambda(n)\e(jd^{-1}n^\gamma)
                  \nonumber \\
    & \,\, \qquad \qquad\times\sum_{0<|h|\leqslant H}a_h\big(\e(ah(n+1-\beta))-\e(ah(n-\beta))\big)\Bigg)\Bigg|
                  \nonumber \\
\ll & \,\, \sum_{d \le D}\max_{x/2<t\leqslant x}\sum_{0<|j|\leqslant J}\frac{1}{|j|}\Big|\phi_j(t)\Big|\Bigg|
           \sum_{x/2<n\leqslant t}\Lambda(n)\e(jd^{-1}n^\gamma)
                  \nonumber \\
    & \,\, \qquad \qquad\times\sum_{0<|h|\leqslant H}a_h\big(\e(ah(n+1-\beta))-\e(ah(n-\beta))\big)\Bigg|
                  \nonumber \\
    & \,\, +\sum_{d \le D}\max_{x/2<t\leqslant x}\int_{\frac{x}{2}}^t\sum_{0<|j|\leqslant J}\frac{1}{|j|}
           \Bigg|\frac{\partial\phi_j(\nu)}{\partial\nu}\Bigg|\Bigg|\sum_{x/2<n\leqslant\nu}\Lambda(n)\e(jd^{-1}n^\gamma)
                  \nonumber \\
    & \,\, \qquad \qquad\times\sum_{0<|h|\leqslant H}a_h\big(\e(ah(n+1-\beta))-\e(ah(n-\beta))\big)\Bigg|\mathrm{d}\nu
                  \nonumber \\
\ll & \,\, x^{\gamma-1}\times\sum_{d \le D}d^{-1}\sum_{0<|j|\leqslant J}\max_{x/2<t\leqslant x}
           \Bigg|\sum_{x/2<n\leqslant t}\Lambda(n)\e(jd^{-1}n^\gamma)
                  \nonumber \\
    & \,\, \qquad \qquad\times\sum_{0<|h|\leqslant H}a_h\big(\e(ah(n+1-\beta))-\e(ah(n-\beta))\big)\Bigg|
                  \nonumber \\
\ll & \,\, x^{\gamma-1}\times\sum_{d \le D}d^{-1}\sum_{0<|j|\leqslant J}\max_{x/2<t\leqslant x}
           \Bigg|\sum_{0<|h|\leqslant H}a_h\theta_h\sum_{x/2<n\leqslant t}\Lambda(n)\e(jd^{-1}n^\gamma+ahn)\Bigg|
                  \nonumber \\
\ll & \,\, x^{\gamma-1}\times\sum_{0<|h|\leqslant H}\frac{1}{|h|}\sum_{d \le D}d^{-1}\sum_{0<|j|\leqslant J}
           \max_{x/2<t\leqslant x}\Bigg|
           \sum_{x/2<n\leqslant t}\Lambda(n)\e(jd^{-1}n^\gamma+ahn)\Bigg|
                  \nonumber \\
\ll & \,\, x^{\gamma-1+\varepsilon}\times\sum_{0<|h|\leqslant H}\frac{1}{|h|}\sum_{d \le D}d^{-1}\sum_{0<|j|\leqslant J}
           \Big(|j|^{1/6}|d|^{-1/6}x^{\gamma/6+3/4}+|j|^{-1/3}|d|^{1/3}x^{1-\gamma/3}
                  \nonumber \\
    & \,\, \qquad \qquad+|j|^{1/4}|d|^{-1/4}x^{\gamma/4+5/8}+|j|^{-1/4}|d|^{1/4}x^{1-\gamma/4}+x^{89/100}\Big)
                  \nonumber \\
\ll & \,\, x^{\gamma-1+\varepsilon}\Big(Dx^{23/12-\gamma}+Dx^{5/3-\gamma}+Dx^{15/8-\gamma}
           +Dx^{7/4-\gamma}+Dx^{189/100-\gamma}\Big)
                  \nonumber \\
\ll & \,\, Dx^{11/12+\varepsilon}\ll x^{\gamma-\varepsilon},
\end{align*}
provided that $D\ll x^{\gamma-11/12-\varepsilon}$, which is worse than the bound of $D \ll x^{\gamma - \frac{380}{441}-\eps}$ from the estimation of $\cS_{3}$.

\subsubsection{Estimation of $\cS_{22}$}

The contribution from $h = 0$ is
$$
\ll H^{-1} \sum_{p \le x} \sum_{0 < |j| \le J} a_{j} \big( \e(j (p+1)^\gamma d^{-1}) - \e(j p^\gamma d^{-1}) \big),
$$
which can be estimated by the same proof of \cite[p. 362, equation (9)]{BBGY} with
$$
D \ll x^{\gamma - \frac{380}{441}-\eps}.
$$
The contribution from $h \neq 0$ can be bounded by same method of $S_{21}$.

\subsubsection{Estimation of $\cS_{23}$}

The contribution from $j = 0$ is
$$
\ll \sum_{p \le x} J^{-1} \sum_{0 < |h| \le H} a_h \( \e(ah(p+1-\beta)) - \e(ah(p-\beta)) \),
$$
which can be estimated by the same method of $\cS_{1}$ with $D \ll x^A$. The contribution from $j \neq 0$ can be bounded by same method of $S_{21}$.

\subsubsection{Estimation of $\cS_{24}$}

The contribution from $h = j = 0$ is
$$
\ll \sum_{p \le x} H^{-1} J^{-1} \ll x^{\gamma - \eps} d^{-1}.
$$
Then we have
$$
\sum_{d \le D} x^{\gamma - \eps} d^{-1} \ll x^{\gamma - \eps/3}.
$$
for any $D \ll x^A$, where $A$ is an arbitrary positive real number. The contribution from $h = 0$ and $j \neq 0$ is
$$
\ll \sum_{p \le x} H^{-1} \sum_{0 < |j| \le J} b_{j} \big( \e(j (p+1)^\gamma d^{-1}) + \e(j p^\gamma d^{-1}) \big),
$$
which can be bounded by the same method of $\cS_{22}$. The contribution from $h \neq 0$ and $j = 0$ is
$$
\ll \sum_{p \le x} J^{-1} \sum_{|h| \le H} b_h \big( \e(ah(p+1-\beta)) + \e(ah(p-\beta)) \big),
$$
which can be treated by the same method of $\cS_{1}$. The contribution from $h \neq 0$ and $j \neq 0$ is
\begin{align*}
&\sum_{p \le x} \sum_{0 < |j| \le J} b_{j} \big( \e(j (p+1)^\gamma d^{-1}) + \e(j p^\gamma d^{-1}) \big) \\
&\qquad \times \sum_{0 < |h| \le H} b_h \( \e(ah(p+1-\beta)) + \e(ah(p-\beta)) \),
\end{align*}
which can be estimated by the same method of $\cS_{21}$.

\section{Acknowledgement}
The first author is supported by the National Natural Science Foundation of China (Grant No. 11901447), the
China Postdoctoral Science Foundation (Grant No. 2019M653576), the Natural Science Foundation of Shaanxi
Province (Grant No. 2020JQ009), and the China Scholarship Council (Grant No. 201906285058).

The second author is supported by the National Natural Science Foundation of China
(Grant No. 11901566, 11971476, 12071238), the Fundamental Research Funds for the Central Universities
(Grant No. 2021YQLX02), and National Training Program of Innovation and Entrepreneurship for Undergraduates
(Grant No. 202107010).

The third author is supported by the National Natural Science Foundation of China (Grant No. 12001047, 11971476),
and the Scientific Research Funds of Beijing Information Science and Technology University (Grant No. 2025035).


\begin{thebibliography}{99}
	
\bibitem{BBGY}
R. C. Baker, W. D. Banks, V. Z. Guo, A. M. Yeager, Piatetski--Shapiro primes from almost primes,
\emph{Monatsh. Math.}, \textbf{174} (2014), no. 3, 357--370.

\bibitem{Daven}
H.~Davenport,
\emph{Multiplicative number theory},
 Springer--Verlag, New York, 1980.


\bibitem{GraKol}
S.~W.~Graham, G.~Kolesnik,
\emph{Van der Corput's method of exponential sums},
Cambridge University Press, Cambridge, 1991.


\bibitem{Greaves}
G.~Greaves,
\emph{Sieves in Number Theory},
Springer--Verlag, Berlin, 2001.


\bibitem{HB}
D.~R.~Heath--Brown,
Prime numbers in short intervals and a generalized Vaughan identity,
\emph{Canadian J. Math.}, \textbf{34} (1982), no. 6, 1365--1377.


\bibitem{IwKo}
H.~Iwaniec, E.~Kowalski,
\emph{Analytic number theory},
American Mathematical Society, Providence, RI, 2004.


\bibitem{Kara}
A.~A.~Karatsuba,
\emph{Basic analytic number theory},
Springer--Verlag, Berlin, 1993.


\bibitem{Khin}
A.~Khinchin,
Zur metrischen Theorie der diophantischen Approximationen,
\emph{Math.\ Z.}, \textbf{24} (1926), no.~1, 706--714.



\bibitem{PS}
I.~I.~Piatetski--Shapiro,
On the distribution of prime numbers in the sequence of the form $\fl{f(n)}$,
\emph{Mat.\ Sb.}, \textbf{33} (1953), 559--566.


\bibitem{RiSa}
J.~Rivat and S.~Sargos,
Nombres premiers de la forme $\fl{n^c}$,
\emph{Canad.\ J.\ Math.}, \textbf{53} (2001), no.~2, 414--433.

\bibitem{RiWu}
J.~Rivat and J.~Wu,
Prime numbers of the form $\fl{n^c}$,
\emph{Glasg.\ Math.\ J.}, \textbf{43} (2001), no.~2, 237--254.

\bibitem{Roth1}
K.~Roth, Rational approximations to algebraic numbers,
\emph{Mathematika}, \textbf{2} (1955), 1--20.

\bibitem{Roth2}
K.~Roth, Corrigendum to ``Rational approximations to algebraic numbers.''
\emph{Mathematika}, \textbf{2} (1955), 168.


\bibitem{Sarg}
P.~Sargos,
Points entiers au voisinage d'une courbe, sommes trigonom\'etriques courtes et paires d'exposants,
\emph{Proc. London Math. Soc. (3)}, \textbf{70} (1995), no. 2, 285--312.


\bibitem{Srin}
B.~R.~Srinivasan,
The lattice point problem of many--dimensional hyperboloids II,
\emph{Acta Arith.}, \textbf{8} (1963), 173--204.


\bibitem{Vaal}
J.~D.~Vaaler,
Some extremal functions in Fourier analysis,
\emph{Bull. Amer. Math. Soc. (N.S.)}, \textbf{12} (1985), no. 2, 183--216.


\bibitem{Vino}
I.~M.~Vinogradov,
A new estimate of a certain sum containing primes (Russian).
\emph{Rec. Math. Mosc.,} \textbf{2} (1937), no.~5, 783--792.
English translation:
New estimations of trigonometrical sums containing primes.
\emph{C.\ R.\ (Dokl.) Acad.\ Sci.\ URSS (N.S.), }
\textbf{17} (1937), 165--166.


\end{thebibliography}
\end{document}